\newcommand{\norm}[1]{\| #1 \|}
\newcommand{\barnorm}[1]{\|\mkern-1.5mu| #1 |\mkern-1mu\|}
\newcommand{\hbarnorm}[1]{\|#1\|_H}
\newcommand{\calE}{\mathcal{E}}
\newcommand{\calT}{\mathcal{T}}
\newcommand{\dA}{\ \textrm{dA}}
\newcommand{\ds}{\ \textrm{ds}}
\newcommand{\dAh}{\ \textrm{dA}_{\textrm{h}}}
\newcommand{\dsh}{\ \textrm{ds}_{\textrm{h}}}
\newcommand{\half}{\frac{1}{2}}
\newcommand{\WT}[1]{\widetilde{#1}}
\newcommand{\WH}[1]{#1}
\newcommand{\ADG}{\mathcal{D}_{h}}
\newcommand{\BDG}{\mathcal{B}_{h}}
\newcommand{\Cw}{\mathcal{C}_{w_h}}
\newcommand{\CDG}{\mathcal{A}_{h}}
\newcommand{\AC}{\mathcal{D}}
\newcommand{\BC}{\mathcal{B}}
\newcommand{\CDGl}{\mathcal{A}}
\begin{document}

\title{Discontinuous Galerkin methods for hyperbolic and advection-dominated problems on surfaces}
\shorttitle{DG methods for advection-dominated problems on surfaces}

\author{%
  Andreas Dedner \thanks{Corresponding author, Email:
a.s.dedner@warwick.ac.uk}
{\sc and
Pravin Madhavan} \\[2pt]
Mathematics Institute and Centre for Scientific Computing, University of Warwick,\\
Coventry CV4 7AL, UK
}
\shortauthorlist{Dedner \emph{et al.}}

\maketitle

\begin{abstract}
{
We extend the discontinuous Galerkin (DG) framework to the analysis of first-order hyperbolic and advection-dominated problems posed on implicitely defined surfaces. The focus will be on the hyperbolic part, which is discretised using a ``discrete surface'' generalisation of the jump-stabilised upwind flux considered in \cite{brezzi2004discontinuous}.  A key issue arising in the analysis (which does not appear in the planar setting) is the treatment of the discrete velocity field, choices of which play an important role in the stability of the scheme. We then prove optimal error estimates in an appropriate norm given a number of assumptions on the discrete velocity field, which are then investigated and discussed in more detail. The theoretical results are verified numerically for a number of test problems exhibiting advection-dominated behaviour. 
}
{discontinuous galerkin; upwind; surface partial differential equations; hyperbolic partial differential equations; advection-dominated problems}
\end{abstract}

\section{Introduction}
Partial differential equations (PDEs) on manifolds have become an active area of research in recent years due to the fact that, in many applications, mathematical models have to be formulated not on a flat Euclidean domain but on a curved surface. For example, they arise naturally in fluid dynamics (e.g.,~surface active agents on the interface between two fluids, \cite{JamLow04}) and material science (e.g.,~diffusion of species along grain boundaries, \cite{DecEllSty01}) but have also emerged in other areas as image processing and cell biology (e.g.,~cell motility involving processes on the cell membrane, \cite{neilson2011modelling} or phase separation on biomembranes, \cite{EllSti10}). 

Finite element methods (FEMs) for elliptic problems and their error analysis have been successfully applied to problems on surfaces via the intrinsic approach in \cite{dziuk1988finite}. This approach has subsequently been extended to parabolic problems \cite{dziuk2007surface} as well as evolving surfaces \cite{dziuk2007finite}. The literature on the application of FEM to various surface PDEs is now quite extensive, a review of which can be found in \cite{dziuk2013finite}. High order error estimates, which require high order surface approximations, have been derived in \cite{demlow2009higher} for the Laplace-Beltrami operator. However, there are a number of situations where conforming FEMs may not be the appropriate numerical method, for instance, problems which lead to steep gradients or even discontinuities in the solution. Such issues can arise for problems posed on surfaces, as in \cite{sokolov2012numerical} where the authors analyse a model for bacteria/cell aggregation. Without an appropriate stabilisation mechanism artificially added to the surface FEMs scheme, the solution can exhibit a spurious oscillatory behaviour which, in the context of the above problem, leads to negative densities of on-surface living cells.

Given the well-known in-built stabilisation mechanisms discontinuous Galerkin methods possess for dealing with hyperbolic/advection dominated problems and solution blow-ups, it is natural to extend the DG framework for PDEs posed on surfaces. DG methods have first been extended to surfaces in \cite{dedner2012analysis}, where an interior penalty (IP) method for a linear second-order elliptic problem was introduced and optimal a priori error estimates in the $L^{2}$ and energy norms for piecewise linear ansatz functions and surface approximations were derived. These results were then generalised to both a more general class of surface DG methods and higher order surface approximations in \cite{antonietti2013analysis}.

In this paper, we will extend the analysis of the surface DG method to the
model problem
\begin{align}
\label{eq:modelproblem}
- \epsilon \Delta_{\Gamma}u + \nabla_{\Gamma} \cdot ( w u ) + cu = f \quad \mbox{on}\ \Gamma,
\end{align} 
where $\epsilon$ is assumed to be small or even equal to zero.
For the elliptic part (when present) we can make use of the surface DG framework described in 
\cite{antonietti2013analysis}. Our main focus will thus be on the
hyperbolic part. The corresponding model problem takes the form 
\begin{align*}
\nabla_{\Gamma} \cdot ( w u ) + u = f \quad \mbox{on}\ \Gamma,
\end{align*} 
where $\Gamma$ is a compact smooth oriented surface in $\mathbb{R}^{3}$,
$c>0$ and $w$ is a velocity-field which is purely tangential to the
surface $\Gamma$. This advection problem is discretised using a ``discrete surface'' generalisation of the 
jump-stabilised upwind flux considered 
in \cite{brezzi2004discontinuous}. A number of challenging issues which do not appear in the planar setting arise when 
attempting to prove stability of the numerical scheme, related to the treatment of the velocity field on the discrete surface. 
We derive optimal a priori error estimates for this scheme given a number of assumptions on the discrete velocity field. 
We then justify these assumptions by choosing the discrete velocity field to be a Raviart-Thomas-type interpolant 
of the velocity field. Numerical results are then presented 
for test problems exhibiting advection-dominated behaviour, suggesting that our surface DG method is stable and 
free of spurious oscillations.

\section{Notation and setting}
\subsection{Continuous surface $\Gamma$}
Let $\Gamma$ be a compact smooth and oriented surface in $\mathbb{R}^{3}$ given by the zero level-set of a signed distance function $|d(x)| = dist(x,\Gamma)$ defined in an open subset $U$ of $\mathbb{R}^{3}$. For simplicity we assume that $\partial \Gamma = \emptyset$ and that $d < 0$ in the interior of $\Gamma$ and $d > 0$ in the exterior. The orientation of $\Gamma$ is set by taking the normal $\nu$ of $\Gamma$ to be pointing in the direction of increasing $d$ whence
\[\nu(\xi) = \nabla d(\xi),\ \xi \in \Gamma. \]
With a slight abuse of notation we also denote the projection to $\Gamma$ by $\xi$, i.e.~$\xi:U \rightarrow \Gamma$ is given by
\begin{equation}\label{eq:uniquePoint}
\xi(x) = x - d(x)\nu(x) \quad \mbox{where } \nu(x):=\nu(\xi(x)).
\end{equation}
It is worth noting that such a projection is (locally) unique provided that the width $\delta_{U} > 0$ of $U$ satisfies
\[\delta_{U} < \left[\max_{i=1,2}\norm{\kappa_{i}}_{L^{\infty}(\Gamma)}\right]^{-1}\]
where $\kappa_{i}$ denotes the $i$th principle curvature of the Weingarten map $\mathbf{H}$, given by 
\begin{align}\label{eq:WeingartenMap} 
\mathbf{H}(x) := \nabla^{2}d(x). 
\end{align} 
Later on, we will consider a triangulated surface $\Gamma_h \subset U$ approximating $\Gamma$ such that there is a one-to-one relation between points $x \in \Gamma_h$ and $\xi \in \Gamma$ so that, in particular, the above relation (\ref{eq:uniquePoint}) can be inverted. Throughout this paper, we denote by
\[ \mathbf{P}(\xi):= \mathbf{I} - \nu(\xi)\otimes \nu(\xi),\ \xi\in \Gamma, \]
the projection onto the tangent space $T_{\xi}\Gamma$ on $\Gamma$ at a point $\xi \in \Gamma$. Here $\otimes$ denotes the usual tensor product.

\begin{definition}
For any function $\eta$ defined on an open subset of $U$ containing $\Gamma$ we can define its \emph{tangential gradient} on $\Gamma$ by
\[ \nabla_{\Gamma}\eta := \nabla \eta - \left(\nabla \eta\cdot \nu \right) \nu = \mathbf{P}\nabla \eta\]
and then the \emph{Laplace-Beltrami} operator on $\Gamma$ by 
\[ \Delta_{\Gamma} \eta := \nabla_{\Gamma}\cdot (\nabla_{\Gamma} \eta).
\]
\end{definition}
\begin{definition}
We define the surface Sobolev spaces
\[ H^{m}(\Gamma) := \{u \in L^{2}(\Gamma) \ : \ D^{\alpha}u \in L^{2}(\Gamma)\ \forall |\alpha| \leq m \}, \quad m \in \mathbb{N} \cup \{ 0 \}, \]
with corresponding Sobolev seminorm and norm respectively given by
\[ 
|u|_{H^{m}(\Gamma)} := \left(\sum_{|\alpha|=m} \norm{D^{\alpha}u}_{L^{2}(\Gamma)}^{2}\right)^{1/2}, \quad 
\norm{u}_{H^{m}(\Gamma)} := \left(\sum_{k=0}^m |u|_{H^{k}(\Gamma)}^{2}\right)^{1/2}.
\]
\end{definition}     
We refer to \cite{wlokapartial} for a proper discussion of Sobolev spaces on manifolds.

Throughout this paper, we write $x \lesssim y$ to signify $x < C  y$, where $C$ is a generic positive constant whose value, possibly different at any occurrence, does not depend on the meshsize. 
Moreover, we use $x\sim y$ to state the equivalence between $x$ and $y$, i.e., $C_1 y \leq x \leq C_2 y$, for $C_1,\ C_2$ independent of the meshsize. 

\subsection{Discrete surface $\Gamma_{h}$}
The smooth surface $\Gamma$ is approximated by a polyhedral surface $\Gamma_{h} \subset U$ composed of planar triangles. Let $\mathcal{T}_{h}$ be the associated regular, conforming triangulation of $\Gamma_{h}$ i.e.
\[ \Gamma_{h} = \bigcup_{K_{h} \in \mathcal{T}_{h}} K_{h}. \]
The vertices are taken to sit on $\Gamma$ so that $\Gamma_{h}$ is its linear interpolation. We assume that the projection map $\xi$ defined in (\ref{eq:uniquePoint}) is a bijection when restricted to $\Gamma_h$, thus avoiding multiple coverings of $\Gamma$ by $\Gamma_h$.
Let $\mathcal{E}_{h}$ denote the set of all codimension one intersections of elements $K_h^{+},K_h^{-}\in \mathcal{T}_{h}$ (i.e., the edges). We define the conormal $n_h^{+}$ on such an intersection $e_{h} \in \mathcal{E}_{h}$ of elements $K_h^{+}$ and $K_h^{-}$ by demanding that \\[1mm]
$\bullet$ $n_h^{+}$ is a unit vector,\\
$\bullet$ $n_h^{+}$ is tangential to (the planar triangle) $K_h^{+}$, \\
$\bullet$ in each point $x \in e_h$ we have that $n_h^{+} \cdot (y-x) \leq 0$ for all $y \in K_h^{+}$. \\[1mm]
Analogously one can define the conormal $n_h^{-}$ on $e_h$ by exchanging $K_h^{+}$ with $K_h^{-}$. It is important to note that
\[ n_{h}^{+} \not = -n_{h}^{-} \]
 in general, and in contrast to the planar setting. 
 Finally, we will denote by $\nu_{h}$ the outward unit normal to $\Gamma_{h}$ and define for each $K_{h} \in \mathcal{T}_h$ the discrete projection $\mathbf{P}_{h}$ onto the tangential space of $\Gamma_{h}$ by
\[ \mathbf{P}_{h}(x) := \mathbf{I} - \nu_{h}(x) \otimes \nu_{h}(x),\ x \in \Gamma_{h}, \]
so that, for $v_{h}$ defined on $\Gamma_{h}$,
\[\nabla_{\Gamma_{h}}v_{h} = \mathbf{P}_{h}\nabla v_{h}.\]
Let $K \subset \mathbb{R}^{2}$ be the (flat) reference element and let $F_{K_{h}} : K \rightarrow K_{h} \subset  \mathbb{R}^{3}$ for $K_{h} \in \calT_h$. We define the DG space associated to $\Gamma_h$ by
\begin{align*}
V_{h} =\{v_{h} \in L^2(\Gamma_h): v_{h}|_{K_{h}}= \chi \circ F_{K_{h}}^{-1} ,\ \chi \in \mathbb{P}^{1}(K)\ \ \ \forall K_{h}\in \calT_{h}\}.
\end{align*}
For $v_h\in V_{h}$ we adopt the convention that $v_h^{\pm}$ is the trace of $v_h$ on $e_{h}= K_{h}^+\cap K_{h}^-$ taken within the interior of $K_{h}^{\pm}$, respectively.

\subsection{Relating $\Gamma_{h}$ to $\Gamma$}
\begin{definition}
For any function $w$ defined on $\Gamma_{h}$ we define the \emph{surface lift} onto $\Gamma$ by
\[ \eta^{l}(\xi) := \eta(x(\xi)),\ \xi \in \Gamma, \]
where by (\ref{eq:uniquePoint}) and the non-overlapping of the triangular elements, $x(\xi)$ is defined as the unique solution of
\[ x = \xi + d(x)\nu(\xi).\]
\end{definition}
Extending $\eta^{l}$ constantly along the lines $s \mapsto \xi + s\nu(\xi)$ we obtain a function defined on $U$.
By (\ref{eq:uniquePoint}), for every $K_{h} \in \mathcal{T}_{h}$, there is a unique curved triangle $K_{h}^{l} := \xi(K_{h}) \subset \Gamma$. Note that we assumed $\xi(x)$ is a bijection so multiple coverings are in fact not permitted. We now define the regular, conforming triangulation $\mathcal{T}_{h}^{l}$ of $\Gamma$ such that
\[ \Gamma = \bigcup_{K_{h}^{l} \in \mathcal{T}_{h}^{l}} K_{h}^{l}. \]
The triangulation $\mathcal{T}_{h}^{l}$ of $\Gamma$ is thus induced by the triangulation $\mathcal{T}_{h}$ of $\Gamma_{h}$ via the surface lift.
The appropriate function space for surface lifted functions is given by
\[V_{h}^{l} := \{v_{h}^{l} \in L^{2}(\Gamma)\ : \ v_{h}^{l}(\xi) =
v_{h}(x(\xi))\ \mbox{with some}\ v_{h} \in V_{h}\}.\]
We also denote by $\eta^{-l} \in V_{h}$ the \emph{inverse} surface lift of
some function $\eta \in V_{h}^{l}$, satisfying $(\eta^{-l})^{l} = \eta$.
Finally, by applying the chain rule for differentiation on (\ref{eq:uniquePoint}), one can show that for $x \in \Gamma_{h}$ and $v_{h}$ defined on $\Gamma_{h}$, we have that
\begin{equation}\label{eq:Gammah2GammaGradient} 
\nabla_{\Gamma_{h}} v_{h}(x) = \mathbf{P}_{h}(x)(\mathbf{I} - d \mathbf{H})(x) \mathbf{P}(x)\nabla_{\Gamma} v_{h}^{l}(\xi(x)).  
\end{equation}
Finally, for $x \in \Gamma_{h}$, we denote the local area  and local edge deformations when transforming $\Gamma_{h}$ to $\Gamma$ by respectively $\delta_{h}(x)$ and $\delta_{e_{h}}(x)$ i.e.
\begin{align*}
\delta_{h}(x) \dAh(x) = \dA(\xi(x)),\ \ \delta_{e_{h}}(x) \dsh(x) = \ds(\xi(x)).
\end{align*}

We finally state and prove some geometric estimates relating $\Gamma_{h}$ to $\Gamma$.
\begin{lemma} \label{Gamma2GammahSmall}
Let $\Gamma$ be a compact smooth and oriented surface in $\mathbb{R}^{3}$ and let $\Gamma_{h}$ be its linear interpolation. Then, omitting the surface lift symbols, we have that
\begin{align}
\norm{d}_{L^{\infty}(\Gamma_{h})} & \lesssim h^{2}, \label{eq:Gamma2GammahSmalla} \\
\norm{1-\delta_{h}}_{L^{\infty}(\Gamma_{h})} &\lesssim h^{2}, \label{eq:Gamma2GammahSmallb} \\
\norm{\nu-\nu_{h}}_{L^{\infty}(\Gamma_{h})}& \lesssim h, \label{eq:Gamma2GammahSmallc} \\
\norm{1-\delta_{e_{h}}}_{L^{\infty}(\mathcal{E}_{h})}& \lesssim h^{2}, \label{eq:Gamma2GammahSmalld} \\
\norm{n^{+/-} -\mathbf{P} n_{h}^{+/-}}_{L^{\infty}(\mathcal{E}_{h})} &\lesssim  h^{2} \label{eq:Gamma2GammahSmalle} \\
\norm{\mathbf{P}-\mathbf{P} \mathbf{P}_{h}}_{L^{\infty}(\Gamma_{h})} &\lesssim h \label{advrelbis} \\
\norm{\mathbf{P}-\mathbf{P} \mathbf{P}_{h} \mathbf{P}}_{L^{\infty}(\Gamma_{h})} &\lesssim h^{2} \label{advrel}. 
\end{align}
\end{lemma}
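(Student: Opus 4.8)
The plan is to prove each of the seven estimates by reducing everything to local information about the signed distance function $d$ and the Weingarten map $\mathbf{H}=\nabla^{2}d$, exploiting the fact that the vertices of $\Gamma_{h}$ lie on $\Gamma$ so that $\Gamma_{h}$ is the nodal linear interpolant of $\Gamma$. The foundational estimate is \eqref{eq:Gamma2GammahSmalla}: since $d$ vanishes at the three vertices of each flat triangle $K_{h}$ and $d$ is smooth on $U$, standard polynomial interpolation theory on the reference element (pulling back through $F_{K_{h}}$) gives that $\norm{d}_{L^{\infty}(K_{h})}$ is controlled by $h^{2}$ times the $C^{2}$ norm of $d$, which is bounded in terms of $\norm{\mathbf{H}}_{L^{\infty}}$. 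I would establish this first and treat it as the master bound, since the remaining estimates all degrade into it after differentiation.

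Next I would handle the normal and projection estimates. For \eqref{eq:Gamma2GammahSmallc}, the idea is that $\nu=\nabla d$ while $\nu_{h}$ is the (piecewise constant) normal to the flat triangle; differentiating the interpolation error for $d$ once costs one power of $h$, giving $\norm{\nu-\nu_{h}}_{L^{\infty}}\lesssim h$. From this, \eqref{advrelbis} follows algebraically: writing $\mathbf{P}-\mathbf{P}\mathbf{P}_{h}=\mathbf{P}(\mathbf{I}-\mathbf{P}_{h})=\mathbf{P}(\nu_{h}\otimes\nu_{h})$ and using $\mathbf{P}\nu=0$ together with $\abs{\nu-\nu_{h}}\lesssim h$ collapses the $O(1)$ terms and leaves a single factor of $h$. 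The estimate \eqref{advrel} is the genuinely second-order refinement of \eqref{advrelbis}: here one writes $\mathbf{P}-\mathbf{P}\mathbf{P}_{h}\mathbf{P}=\mathbf{P}(\nu_{h}\otimes\nu_{h})\mathbf{P}=\mathbf{P}\nu_{h}\otimes\mathbf{P}\nu_{h}$, and since $\mathbf{P}\nu_{h}=\mathbf{P}(\nu_{h}-\nu)$ has size $O(h)$, the tensor square is $O(h^{2})$. The sandwiching by $\mathbf{P}$ on both sides is exactly what recovers the extra power, and I expect this symmetric-cancellation observation to be the conceptual heart of the two projection estimates.

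For the deformation factors \eqref{eq:Gamma2GammahSmallb} and \eqref{eq:Gamma2GammahSmalld}, I would use the explicit relation \eqref{eq:Gammah2GammaGradient}, which expresses the change of variables through the matrix $\mathbf{P}_{h}(\mathbf{I}-d\mathbf{H})\mathbf{P}$. The area and edge Jacobians $\delta_{h}$ and $\delta_{e_{h}}$ are (up to lower-order geometric factors) determinants/ratios built from this matrix, so $1-\delta_{h}$ and $1-\delta_{e_{h}}$ are controlled by $\norm{d}_{L^{\infty}}\norm{\mathbf{H}}_{L^{\infty}}$ together with the $O(h)$ normal discrepancy; the point is that the $O(h)$ contributions enter quadratically or cancel by the same mechanism as above, leaving the $h^{2}$ from $\norm{d}_{L^{\infty}}$ as the dominant term. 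The conormal estimate \eqref{eq:Gamma2GammahSmalle} is the most delicate: $n_{h}^{+/-}$ is tangential to the flat triangle and outward across the edge, while $n^{+/-}$ is the corresponding conormal on the curved lifted triangle, so I would compare them by lifting the edge $e_{h}$ to $\Gamma$ and tracking how the tangent direction and the triangle plane rotate under the lift. Because both the edge tangent and the surface normal are matched to second order at the endpoints, and the asymmetry $n_{h}^{+}\neq -n_{h}^{-}$ is itself an $O(h)$ effect that is killed by the projection $\mathbf{P}$ on the left, I expect the leading $O(h)$ terms to cancel and leave $O(h^{2})$.

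The main obstacle I anticipate is the conormal estimate \eqref{eq:Gamma2GammahSmalle}, precisely because the conormals on $\Gamma_{h}$ do not satisfy the clean antisymmetry $n_{h}^{+}=-n_{h}^{-}$ that one has in the planar case, so the cancellations that deliver second order are not automatic and must be extracted carefully from the geometry of the lift rather than from the algebra of a single projection. The other estimates, by contrast, reduce fairly mechanically to \eqref{eq:Gamma2GammahSmalla} and \eqref{eq:Gamma2GammahSmallc} once the right algebraic regrouping (isolating a tensor product that is squared in the normal error) is performed, and I would present them in the order \eqref{eq:Gamma2GammahSmalla}, \eqref{eq:Gamma2GammahSmallc}, \eqref{advrelbis}, \eqref{advrel}, \eqref{eq:Gamma2GammahSmallb}, \eqref{eq:Gamma2GammahSmalld}, \eqref{eq:Gamma2GammahSmalle} so that each builds on the previous.
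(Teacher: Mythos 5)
Your plan is sound, and it is in fact more self-contained than the paper's own proof, which is mostly citation: the paper refers to \cite{dziuk1988finite} for \eqref{eq:Gamma2GammahSmalla}--\eqref{eq:Gamma2GammahSmallc}, to \cite{antonietti2013analysis} for \eqref{eq:Gamma2GammahSmalld}--\eqref{eq:Gamma2GammahSmalle}, and only proves \eqref{advrel} directly (with \eqref{advrelbis} ``by similar arguments''). For the two estimates the paper actually proves, your route is equivalent but tidier: your identity $\mathbf{P}-\mathbf{P}\mathbf{P}_{h}\mathbf{P}=\mathbf{P}(\nu_{h}\otimes\nu_{h})\mathbf{P}=(\mathbf{P}\nu_{h})\otimes(\mathbf{P}\nu_{h})$, combined with $\mathbf{P}\nu_{h}=\mathbf{P}(\nu_{h}-\nu)$ and \eqref{eq:Gamma2GammahSmallc}, packages in one line the same cancellation the paper extracts by expanding $(\mathbf{P}-\mathbf{P}\mathbf{P}_{h}\mathbf{P})x$ componentwise and invoking $1-\nu\cdot\nu_{h}=\frac{1}{2}|\nu-\nu_{h}|^{2}$; the quadratic appearance of the normal error is identical in both, and your one-sided version $(\mathbf{P}\nu_{h})\otimes\nu_{h}$ for \eqref{advrelbis} is likewise correct. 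Your interpolation-theoretic derivations of \eqref{eq:Gamma2GammahSmalla} (since $d$ vanishes at the vertices, $d|_{K_{h}}$ \emph{is} its own interpolation error) and \eqref{eq:Gamma2GammahSmallc}, and the observation that the normal discrepancy enters the Jacobians $\delta_{h},\delta_{e_{h}}$ only quadratically through $\nu\cdot\nu_{h}$, are exactly the standard arguments behind the cited results, so these parts would stand as complete proofs with routine elaboration.

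The one place your proposal falls short of a proof is the conormal estimate \eqref{eq:Gamma2GammahSmalle}, which you correctly single out as the delicate case but resolve only with ``I expect the leading $O(h)$ terms to cancel.'' That expected cancellation is the entire content of the estimate, and it is precisely the point at which the paper leans on \cite{antonietti2013analysis}. To close it one must actually carry out the computation: decompose $\mathbf{P}n_{h}^{+}$ in the frame $\{n^{+},t\}$ of $T_{\xi}\Gamma$ along the lifted edge ($t$ the unit tangent of $e_{h}^{l}$), note that the component of $n_{h}^{+}$ along $\nu$ is only $O(h)$ but is annihilated by $\mathbf{P}$, show that the component along $t$ is $O(h^{2})$ using that the endpoints of $e_{h}$ lie on $\Gamma$ (chord versus curve matching to second order), and control the resulting $O(h^{2})$ defect in the normalization $|\mathbf{P}n_{h}^{+}|$. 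Without this, the claimed $h^{2}$ (rather than the naive $h$) in \eqref{eq:Gamma2GammahSmalle} is asserted, not established, so your write-up would need either this argument in full or the same citation the paper uses.
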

\begin{proof}
The first three estimates are proven in \cite{dziuk1988finite}. The next two estimates are proven in \cite{antonietti2013analysis}. For the last estimate, it is sufficient to show that $\left(\mathbf{P}-\mathbf{P} \mathbf{P}_{h} \mathbf{P} \right) x$ for $x \in \mathbb{R}^{3}$ scales appropriately. Setting $\tilde{x} = \mathbf{P} x$ (which is tangential to $\Gamma$) and noting that $\mathbf{P}_{h} \tilde{x} = \tilde{x} - (\tilde{x} \cdot \nu_{h})\nu_{h}$, we have that
\begin{align*}
&\left(\mathbf{P}-\mathbf{P} \mathbf{P}_{h} \mathbf{P}\right) x = \tilde{x} - \mathbf{P} \mathbf{P}_{h} \tilde{x} = \tilde{x} - \mathbf{P}\left( \tilde{x} - (\tilde{x} \cdot \nu_{h})\nu_{h} \right) = \tilde{x} - \left( \tilde{x} - (\tilde{x} \cdot \nu) \nu - (\tilde{x} \cdot \nu_{h})(\nu_{h} - (\nu_{h} \cdot \nu)\nu         \right) \\
&= (\tilde{x} \cdot \nu_{h}) \nu_{h} - (\tilde{x} \cdot \nu_{h})(\nu_{h} \cdot \nu)\nu = (\tilde{x} \cdot \nu_{h}) (\nu_{h} - \nu) + (\tilde{x} \cdot \nu_{h})\nu \left( 1 - (\nu_{h} \cdot \nu) \right) \\
&= (\tilde{x} \cdot \left( \nu_{h} - \nu \right)) (\nu_{h} - \nu) + (\tilde{x} \cdot \left(\nu_{h} - \nu \right))\nu \left( 1 - (\nu_{h} \cdot \nu) \right) \leq |\nu_{h} - \nu|^{2}|\tilde{x}| + \half |\nu_{h} - \nu|^{2}|\tilde{x}| \\
&\lesssim h^{2} |\tilde{x}| \lesssim h^{2} |x| 
\end{align*}
where we have used the equality $1 - (\nu_{h} \cdot \nu) = \half |\nu_{h} - \nu|^{2}$ and the geometric estimate (\ref{eq:Gamma2GammahSmallc}). The proof of the second to last estimate follows similar arguments.  
\end{proof}

We complete this section by defining an $L^2$ type projection operator
for function in $H^2(\Gamma_{h})$:

\begin{lemma}\label{interpolationEstimate}
Let $\eta \in H^{2}(\Gamma_{h})$  and denote by $\Pi_{hk} \eta$ the
$L^{2}$ projection of $\eta$ onto $V_{h}$ for $k=1$ and piecewise constant functions on $\Gamma_{h}$ for $k=0$. Furthermore, for notational simplicity, we define $\Pi_{h} \eta := \Pi_{h1} \eta$. 
Then, for sufficiently small $h$, we have that
\begin{align*}
\norm{\eta - \Pi_{hk} \eta}_{L^{p}(K_{h})} &\lesssim h^{k+1}\norm{\eta}_{W^{k+1,p}(K_{h})},\ \ 1 \leq p \leq \infty,\ \ k = 0, 1, \\
\norm{\eta - \Pi_h \eta}_{L^2(\partial K_h)} &\lesssim h^{3/2}\norm{\eta}_{H^{2}(K_h)}
\end{align*}
for each $K_{h} \in \mathcal{T}_{h}$.
\end{lemma}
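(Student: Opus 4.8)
The plan is to reduce everything to classical planar interpolation theory by exploiting two structural facts. First, $V_h$ is a \emph{discontinuous} finite element space, so the global $L^2$ projection $\Pi_{hk}$ decouples and acts elementwise as the local $L^2(K_h)$ projection onto the pulled-back polynomial space; consequently it suffices to fix a single element $K_h \in \mathcal{T}_h$ and bound $\norm{\eta - \Pi_{hk}\eta}_{L^p(K_h)}$ there, with no inter-element continuity required and $\eta \in H^2(\Gamma_h)$ regarded as $\eta|_{K_h} \in W^{k+1,p}(K_h)$. Second, each $K_h$ is a \emph{planar} triangle, so the reference map $F_{K_h} : K \to K_h$ is affine, which is what allows the usual scaling machinery to go through unchanged.

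First I would transfer the estimate to the reference triangle $K$ via $\hat{\eta} := \eta \circ F_{K_h}$. Since $F_{K_h}$ is affine its area Jacobian is constant on $K_h$, so the $L^2(K_h)$ inner product pulls back to a constant multiple of the $L^2(K)$ inner product; the local projection therefore commutes with the pullback, i.e.\ $\widehat{\Pi_{hk}\eta} = \hat{\Pi}_k \hat{\eta}$, where $\hat{\Pi}_k$ is the $L^2(K)$ projection onto $\mathbb{P}^k(K)$. Shape regularity of $\mathcal{T}_h$ then supplies the standard scaling relations between $L^p$ and $W^{m,p}$ norms on $K_h$ and their hatted counterparts on $K$, the affine map contributing the powers of $h$. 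The only surface-specific point to verify here is that the intrinsic tangential norms $W^{k+1,p}(K_h)$ agree, up to affine scaling, with the planar norms on $K$; this holds precisely because $K_h$ is flat, so that $\nabla_{\Gamma_h}$ reduces to the ordinary gradient within the two-dimensional plane carrying $K_h$ and the curvature correction in (\ref{eq:Gammah2GammaGradient}) never appears.

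On the reference element the result is classical. Because $\hat{\Pi}_k$ reproduces polynomials of degree at most $k$ and is bounded in $L^p(K)$ (all norms being equivalent on the fixed finite-dimensional space), the Bramble--Hilbert / Deny--Lions lemma gives $\norm{\hat{\eta} - \hat{\Pi}_k \hat{\eta}}_{L^p(K)} \lesssim |\hat{\eta}|_{W^{k+1,p}(K)}$, and scaling back to $K_h$ produces the factor $h^{k+1}$ and hence the first estimate. The same argument applied at the level of the gradient yields the companion bound $\norm{\nabla_{\Gamma_h}(\eta - \Pi_h \eta)}_{L^2(K_h)} \lesssim h\,\norm{\eta}_{H^2(K_h)}$ for $k=1$, which I will need below.

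For the boundary estimate I would invoke the scaled multiplicative trace inequality on $K_h$, namely $\norm{v}_{L^2(\partial K_h)}^2 \lesssim h^{-1}\norm{v}_{L^2(K_h)}^2 + h\,\norm{\nabla_{\Gamma_h} v}_{L^2(K_h)}^2$, itself obtained by transferring the reference trace theorem and scaling. Taking $v = \eta - \Pi_h \eta$ and inserting the two volume bounds just established, both terms on the right contribute at order $h^3 \norm{\eta}_{H^2(K_h)}^2$, so taking square roots gives the claimed $h^{3/2}$. I expect the only genuine work to be the careful bookkeeping of the affine scaling constants and the check that they depend on the mesh solely through the shape-regularity parameter; the ``sufficiently small $h$'' hypothesis serves merely to guarantee uniform regularity and that $\xi$ restricts to a bijection on each $K_h$, and plays no role in the interpolation argument itself. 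The main obstacle is therefore notational rather than conceptual, since the flatness of the elements removes exactly the curvature corrections that would otherwise couple $\Gamma_h$-derivatives to $\Gamma$-derivatives.
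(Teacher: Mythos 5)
Your proposal is correct and matches the paper's proof, which consists of the single remark that both estimates ``follow from applying standard arguments on each $K_{h} \in \mathcal{T}_{h}$ (which hold since each triangle $K_{h}$ is planar)'' --- you have simply made those standard arguments explicit (elementwise decoupling of the discontinuous $L^2$ projection, affine pullback to the reference triangle, Bramble--Hilbert, and a scaled multiplicative trace inequality for the boundary bound). Your observation that flatness of the elements is precisely what lets $\nabla_{\Gamma_h}$ reduce to the planar gradient, so that no curvature correction from \eqref{eq:Gammah2GammaGradient} enters, is exactly the point of the paper's parenthetical.
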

\begin{proof}
The proof of both estimates follow from applying standard arguments on each $K_{h} \in \mathcal{T}_{h}$ (which hold since each triangle $K_{h}$ is planar).
\end{proof}


\section{Problem formulation, discretisation and properties}
\subsection{Model problem}
We will split our model problem (\ref{eq:modelproblem}) into two parts: an
elliptic part and a first-order
hyperbolic operator which, when written in weak form over $H^1(\Gamma)$, are respectively given by
\[
 \int_\Gamma \epsilon \nabla_{\Gamma}u \cdot \nabla_{\Gamma}v \dA 
\]
and
\[\int_\Gamma - w u \cdot \nabla_{\Gamma}v  + c u v \dA\]
where the velocity field $w$ can be assumed to be purely \emph{tangential} to the surface, i.e. 
$w \cdot \nu = 0$ everywhere, since any normal contribution would vanish when multiplied with 
$\nabla_{\Gamma} u$. We will also assume, for simplicity, that the velocity field is divergence-free 
which, together with $w \cdot \nu = 0$, implies that $\nabla_{\Gamma} \cdot
w = 0$. Finally the mass term is multiplied by a bounded function $c>0$. The weak problem then reads: find $u \in H^{1}(\Gamma)$ such that 
\begin{equation} \label{eq:weakH1AdvectionDiffusion}
 \int_\Gamma \left( \epsilon \nabla_{\Gamma}u - w u \right) \cdot \nabla_{\Gamma}v + c uv \dA =
   \int_\Gamma fv\dA \quad\forall v\in H^1(\Gamma). 
\end{equation}
Existence and uniqueness of a solution $u \in H^{2}(\Gamma)$ follows from standard arguments.

\subsection{Discretisation of the hyperbolic operator}
Before we define a DG discretisation, we introduce the following discrete surface trace operators:
\begin{definition}\label{def:jumpAverageOparators}
For $q \in \Pi_{K_{h} \in \mathcal{T}_{h}}L^{2}(\partial K_{h}) $, $\{q\}$ and $[q]$ are given by
\[ \{q\} := \frac{1}{2}(q^++q^-), \ [q] := q^+-q^- \ \text{on } e_{h}\in \calE_{h}. \]
For $\phi, \tilde{n} \in [\Pi_{K_{h} \in \mathcal{T}_{h}}L^{2}(\partial K_{h})]^{3}$, $\{\phi;\tilde{n}\}$ and $[\phi;\tilde{n}]$ are given by
\[ \{\phi;\tilde{n}\} := \frac{1}{2}(\phi^{+} \cdot \tilde{n}^{+} - \phi^{-} \cdot \tilde{n}^{-}),\   \ [\phi;\tilde{n}] := \phi^{+}\cdot \tilde{n}^{+}+\phi^{-}
\cdot \tilde{n}^{-}\ \ \text{on } e_{h}\in \calE_{h}. \]
\end{definition}

Now we can define the discrete bilinear form for the advection operator:
\begin{align}\label{eq:motivationEquation}
\BDG(u_{h},v_{h}) := 
  &\sum_{K_{h} \in \mathcal{T}_{h}}\int_{K_{h}}  - w_{h} u_{h} \cdot \nabla_{\Gamma_{h}} v_{h} + 
  \left( c + \gamma_{h}\right) u_{h} v_{h} \dAh +  
    \sum_{e_{h} \in \mathcal{E}_{h}}\int_{e_{h}}\widehat{w_{h} u_{h}}
    [v_{h}] \dsh + \Cw(u_h,v_h)
\end{align}      
$w_{h}$ is a discrete velocity field which will be explicitly related to
$w$ further down.
We define the surface upwind flux $\widehat{w_{h} u_{h}}$ by
\[ \widehat{w_{h} u_{h}} = \{ w_{h} ; n_{h} \}\{u_h\} +  \xi_{e_{h}}[u_{h}]  \]
where $\xi_{e_{h}} := \frac{1}{2}|\{w_{h};n_{h}\}|$.
For reasons that will be clear later on, the discrete mass perturbation coefficient $\gamma_{h}$ is given by
\begin{align}\label{eq:discreteMassCoefficient}
\gamma_{h} =
   \max\{-\nabla_{\Gamma_{h}}\cdot w_h,
         -\half (c+\nabla_{\Gamma_{h}}\cdot w_h) \}
\end{align}
Finally an additional term is added to take possible discontinuities in the discrete 
velocity field into account:
\begin{align}\label{eq:velocityJumpCorrection}
\Cw(u_h,v_h) := 
    \sum_{e_h\in\mathcal{E}_h}\int_{e_h}\frac{1}{2}[w_h;n_h]\{u_hv_h\} 
\end{align}      

We make the following assumptions on the approximation properties of the
discrete velocity field:
if $w \in [W^{2,\infty}(K_{h}^{l})]^{3}$, then for each $K_{h} \in \mathcal{T}_{h}$
we have
\begin{align}
\label{eq:advectionGeometricEstimate1a}
&\norm{\mathbf{P}_{h}w^{-l} - w_{h}}_{L^{\infty}(K_{h})} \lesssim h^{p_1},\\
\label{eq:advectionGeometricEstimate2}
&\norm{\nabla_{\Gamma_{h}} \cdot w_{h}}_{L^{\infty}(K_{h})} \lesssim h^{p_2} 
\end{align}
where $p_1,p_2 > 0$. Note that assumption (\ref{eq:advectionGeometricEstimate2}) implies that
\begin{align}\label{eq:eq:advectionGeometricEstimate2Corollary}
\gamma_{h} \equiv -\nabla_{\Gamma_h}w_h\ \ \mbox{for}\ h\ \mbox{small enough}.
\end{align}  

\begin{remark}
Note that
\[
\widehat{w_{h} u_{h}}[v_h] =
\left\{
	\begin{array}{lll}
    u_h^+(w_h^+\cdot n_h^+ - w_h^-\cdot n_h^-)(v_h^+-v_h^-) 
		      & \mbox{if } \{w_{h};n_h\} \cdot n_{h}^{+} > 0; \\
    u_h^-(w_h^+\cdot n_h^+ - w_h^-\cdot n_h^-)(v_h^+-v_h^-) 
		      & \mbox{if } \{w_{h};n_h\} \cdot n_{h}^{+} < 0; \\
    0 & \mbox{if } w_{h} \cdot n_{h}^{+} = 0, \\
	\end{array}
\right.
\]
and it can thus be seen that this flux works in exactly the same way as the classical (planar) upwind flux
in the case that $w_h^+\cdot n_h^+ - w_h^-\cdot n_h^- = w\cdot n_h^+$.
\end{remark}

We will discuss options for defining a discrete velocity field after
proving stability and error estimates for the discretization.

\subsection{Stability of the advection operator}
In the following we shall prove stability of $\BDG$ in the norm 
\begin{align}\label{def:DGNorm} 
\hbarnorm{\cdot}^{2} := 
\norm{\cdot}_{L^{2}(\Gamma_{h})}^{2} 
+ \sum_{e_{h} \in \mathcal{E}_{h}} \left\norm{\sqrt{\xi_{e_{h}}} [ \cdot ]\right}_{L^{2}(e_{h})}^{2}. 
\end{align}
We first require a useful formula which holds for functions in
\[ H^{1}(\mathcal{T}_{h}) := \{ v|_{K_{h}} \in H^{1}(K_{h})\ :\ \forall K_{h} \in \mathcal{T}_{h} \}. \]
\begin{lemma}\label{eq:intByParts}
Let $\phi \in [H^{1}(\mathcal{T}_{h})]^{3}$ and $\psi \in H^{1}(\mathcal{T}_{h})$. Then we have that
\begin{align*}
\sum_{K_{h} \in \calT_{h}}\int_{\partial K_{h}}\psi \phi \cdot n_{K_{h}} \dsh =\sum_{e_{h} \in \calE_{h}}\int_{e_{h}}[\phi;n_{h}]\{\psi\} + \{\phi;n_{h}\}[\psi] \dsh.
\end{align*}
\end{lemma}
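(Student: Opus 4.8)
The plan is to establish the identity by summing a standard elementwise trace relation and then carefully rewriting the sum over element boundaries as a sum over edges. The core observation is that this is a purely algebraic bookkeeping exercise: for each fixed edge $e_h = K_h^+ \cap K_h^-$, the left-hand side receives one contribution from $K_h^+$ (with its own conormal) and one from $K_h^-$ (with its own conormal), and I must show these two contributions combine into exactly $[\phi;n_h]\{\psi\} + \{\phi;n_h\}[\psi]$ as defined in Definition~\ref{def:jumpAverageOparators}.

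First I would fix an edge $e_h \in \mathcal{E}_h$ shared by $K_h^+$ and $K_h^-$. On this edge the outward conormals from the two elements are $n_{K_h^+} = n_h^+$ and $n_{K_h^-} = n_h^-$ respectively, so the total contribution to the boundary sum from this edge is
\begin{align*}
\psi^+ \phi^+ \cdot n_h^+ + \psi^- \phi^- \cdot n_h^-.
\end{align*}
The goal is to verify that this equals $[\phi;n_h]\{\psi\} + \{\phi;n_h\}[\psi]$. Substituting the definitions $\{\psi\} = \tfrac12(\psi^+ + \psi^-)$, $[\psi] = \psi^+ - \psi^-$, together with $[\phi;n_h] = \phi^+\cdot n_h^+ + \phi^-\cdot n_h^-$ and $\{\phi;n_h\} = \tfrac12(\phi^+\cdot n_h^+ - \phi^-\cdot n_h^-)$, I would expand the right-hand side and collect terms. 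The cross terms $\phi^+\cdot n_h^+\,\psi^-$ and $\phi^-\cdot n_h^-\,\psi^+$ cancel, leaving precisely $\psi^+\phi^+\cdot n_h^+ + \psi^-\phi^-\cdot n_h^-$, which matches.

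The subtle point here — and the step I expect to be the main obstacle — is the sign convention built into the surface jump/average operators, which is \emph{not} the planar one. Because $n_h^+ \neq -n_h^-$ in general on the discrete surface, the definitions in Definition~\ref{def:jumpAverageOparators} already absorb the conormals \emph{with their intrinsic signs} (note $\{\phi;\tilde n\}$ uses a \emph{minus} between the two terms while $[\phi;\tilde n\}]$ uses a \emph{plus}, the reverse of the scalar $\{\cdot\},[\cdot]$). Consequently, unlike the planar case, I must not introduce any extra sign when passing from $n_{K_h^-}$ to $n_h^-$: the conormal $n_h^-$ is already the genuine outward conormal of $K_h^-$, so the elementwise term is $\psi^-\phi^-\cdot n_h^-$ with a plus sign, and it is exactly this convention that makes the algebra close. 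Verifying that the bracket definitions are consistent with the outward-conormal sign on each element is the only place where the surface setting genuinely differs from the standard flat DG identity.

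Finally, summing the verified per-edge identity over all $e_h \in \mathcal{E}_h$ reconstitutes the full right-hand side, while the left-hand side is just the sum over all elements of their boundary integrals, each boundary edge being counted once from each adjacent element. Since $\partial\Gamma = \emptyset$ there are no genuine boundary edges to treat separately, so every edge is interior and shared by exactly two triangles, and the identity follows. The regularity $\phi \in [H^1(\mathcal{T}_h)]^3$, $\psi \in H^1(\mathcal{T}_h)$ guarantees the traces $\phi^\pm, \psi^\pm$ exist in $L^2(e_h)$ so that all integrals are well defined.
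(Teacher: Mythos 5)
Your proposal is correct and takes essentially the same route as the paper: the paper's proof simply condenses your per-edge bookkeeping into the single step $\sum_{K_h\in\mathcal{T}_h}\int_{\partial K_h}\psi\,\phi\cdot n_{K_h}\,\mathrm{ds}_h=\sum_{e_h\in\mathcal{E}_h}\int_{e_h}[\psi\phi;n_h]\,\mathrm{ds}_h$ followed by exactly the algebraic expansion $[\psi\phi;n_h]=[\phi;n_h]\{\psi\}+\{\phi;n_h\}[\psi]$ that you verify by cancelling the cross terms. Your remarks on the surface-specific sign convention (no extra sign when passing to $n_h^-$, since $n_h^+\neq-n_h^-$ in general) and on $\partial\Gamma=\emptyset$ making every edge interior are sound and consistent with the paper's setting.
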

\begin{proof}
The result follows straightforwardly by noting that
\begin{align*}
\sum_{K_{h} \in \calT_{h}}\int_{\partial K_{h}}\psi \phi \cdot n_{K_{h}} \dsh &=  \sum_{e_{h} \in \calE_{h}}\int_{e_{h}}[\psi \phi; n_{h}] \dsh = \sum_{e_{h} \in \calE_{h}}\int_{e_{h}}[\phi;n_{h}]\{\psi\} + \{\phi;n_{h}\}[\psi] \dsh.
\end{align*}
\end{proof}
\begin{lemma}\label{Lemma:boundStabGammakIPAdvection}
The surface DG bilinear form $\BDG$ is stable, i.e.,
\begin{equation*}
\begin{aligned}
&\BDG(u_h,u_h)\gtrsim  \hbarnorm{u_h}^2,
\end{aligned}
\end{equation*}
for every $u_h,v_h \in V_{h}$, provided that the discrete velocity field $w_{h}$ satisfies 
(\ref{eq:advectionGeometricEstimate2}).
\end{lemma}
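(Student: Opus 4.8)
The plan is to test the bilinear form against $v_h=u_h$ and to reduce $\BDG(u_h,u_h)$ to a manifestly nonnegative expression by an element-wise integration by parts. The whole point of the construction is that the correction term $\Cw$ and the central part of the upwind flux $\widehat{w_h u_h}$ have been engineered precisely to annihilate the indefinite edge contributions that arise on the discrete surface, so that only the coercive pieces survive.

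First I would treat the volume advection term. Writing $-w_h u_h\cdot\nabla_{\Gamma_{h}}u_h=-\half\, w_h\cdot\nabla_{\Gamma_{h}}(u_h^2)$ and integrating by parts on each planar triangle $K_h$ (where the flat-triangle surface divergence theorem applies to the tangential field $w_h u_h^2$) produces an interior contribution $+\half\int_{K_h}(\nabla_{\Gamma_{h}}\cdot w_h)u_h^2\dAh$ together with a boundary term $-\half\int_{\partial K_h}u_h^2\,w_h\cdot n_{K_h}\dsh$. Summing the boundary terms over $\mathcal{T}_h$ and applying Lemma~\ref{eq:intByParts} with $\phi=w_h$ and $\psi=u_h^2$ rewrites them as $-\half\sum_{e_h\in\mathcal{E}_h}\int_{e_h}\big([w_h;n_h]\{u_h^2\}+\{w_h;n_h\}[u_h^2]\big)\dsh$.

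Next I would collect all the edge contributions. The term $-\half[w_h;n_h]\{u_h^2\}$ is cancelled exactly by $\Cw(u_h,u_h)=\half\sum_{e_h\in\mathcal{E}_h}\int_{e_h}[w_h;n_h]\{u_h^2\}\dsh$; this is the sole reason for including the velocity-jump correction, and it is nonzero precisely because $n_h^{+}\neq-n_h^{-}$ on the discrete surface. For the flux I would use $\{u_h\}[u_h]=\half[u_h^2]$ to obtain $\widehat{w_h u_h}[u_h]=\half\{w_h;n_h\}[u_h^2]+\xi_{e_h}[u_h]^2$, whose first summand cancels the remaining $-\half\{w_h;n_h\}[u_h^2]$. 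What is left is the identity
\[
\BDG(u_h,u_h)=\khsum{\Big(\half\nabla_{\Gamma_{h}}\cdot w_h+c+\gamma_h\Big)u_h^2}+\ehsum{\xi_{e_h}[u_h]^2}.
\]

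Finally I would bound the effective mass coefficient from below. Setting $D:=\nabla_{\Gamma_{h}}\cdot w_h$, the definition $\gamma_h=\max\{-D,-\half(c+D)\}$ gives $\half D+c+\gamma_h=\max\{c-\half D,\half c\}\geq\half c$; moreover assumption~(\ref{eq:advectionGeometricEstimate2}) together with its corollary~(\ref{eq:eq:advectionGeometricEstimate2Corollary}) places us in the branch $\gamma_h\equiv-D$ for $h$ small, so the coefficient equals $c-\half D$ and, since $|D|\lesssim h^{p_2}\to0$ and $c$ is bounded below by a positive constant, it is bounded below by $\half c$. As $\xi_{e_h}\geq0$, the edge term is exactly $\sum_{e_h\in\mathcal{E}_h}\norm{\sqrt{\xi_{e_h}}[u_h]}_{L^2(e_h)}^2$, and combining the two lower bounds yields $\BDG(u_h,u_h)\gtrsim\hbarnorm{u_h}^2$. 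I expect the delicate step to be the edge bookkeeping — verifying that Lemma~\ref{eq:intByParts}, $\Cw$, and the central flux really do cancel every indefinite boundary term despite the asymmetry $n_h^{+}\neq-n_h^{-}$ — whereas the coefficient estimate is routine once the $\max$ structure of $\gamma_h$ is exploited.
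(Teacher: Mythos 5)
Your proposal is correct and follows essentially the same route as the paper's proof: testing with $v_h=u_h$, integrating by parts elementwise and applying Lemma~\ref{eq:intByParts}, observing that $\Cw$ cancels the $-\half[w_h;n_h]\{u_h^2\}$ term while $\{u_h\}[u_h]=\half[u_h^2]$ cancels the central flux against the remaining boundary contribution, and concluding via $c+\gamma_h+\half\nabla_{\Gamma_h}\cdot w_h\geq\half c>0$ from the $\max$ structure of \eqref{eq:discreteMassCoefficient}. Your only (harmless) deviation is the extra remark about assumption~\eqref{eq:advectionGeometricEstimate2} forcing the branch $\gamma_h\equiv-\nabla_{\Gamma_h}\cdot w_h$ for small $h$, which, as you correctly note, is not needed since the $\max$ already yields the lower bound $\half c$ unconditionally.
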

\begin{proof}
We proceed along the lines of \cite{brezzi2004discontinuous} by testing (\ref{eq:motivationEquation}) 
with $v_{h} = u_{h}$ and integrating by parts on each $K_{h} \in
\mathcal{T}_{h}$, applying Lemma \ref{eq:intByParts}.
By doing so, we obtain
\begin{align*}
\BDG(u_{h},u_{h}) &=
  \sum_{K_{h} \in \mathcal{T}_{h}}\int_{K_{h}}\left( c + \gamma_{h} + 
         \half \nabla_{\Gamma_{h}} \cdot w_{h} \right) u_{h}^{2} \dAh + 
         \sum_{e_{h} \in \mathcal{E}_{h}}\int_{e_{h}}  
                        \{ w_{h};n_h\}\{ u_{h} \}[u_{h}] + 
                        \xi_{e_{h}}|[u_{h}]|^{2} \dsh \notag \\
  &\qquad - \half \sum_{e_{h} \in \mathcal{E}_{h}}\int_{e_{h}}
      \{w_{h};n_{h}\}[u_{h}^{2}] + [w_{h};n_{h}]\{u_{h}^{2}\}\dsh
   + \half \sum_{e_{h} \in \mathcal{E}_{h}}\int_{e_{h}} [w_h;n_h]\{u_h^2\} \notag
\\
  &= 
  \sum_{K_{h} \in \mathcal{T}_{h}}\int_{K_{h}}\left( c + \gamma_{h} + 
         \half \nabla_{\Gamma_{h}} \cdot w_{h} \right) u_{h}^{2} \dAh \\
  &\qquad + \sum_{e_{h} \in \mathcal{E}_{h}}\int_{e_{h}}  
    \{ w_{h}; n_{h} \}\{u_h\}[u_{h}] + \xi_{e_{h}}|[u_{h}]|^{2} 
    - \half \{w_{h};n_{h}\}[u_{h}^{2}] 
\\
  &= 
  \sum_{K_{h} \in \mathcal{T}_{h}}\int_{K_{h}}\left( c + \gamma_{h} + 
         \half \nabla_{\Gamma_{h}} \cdot w_{h} \right) u_{h}^{2} \dAh \\
  &\qquad + \sum_{e_{h} \in \mathcal{E}_{h}}\int_{e_{h}}  
    \{ w_{h}; n_{h} \}\{u_h\}[u_{h}] 
    + \xi_{e_{h}}|[u_{h}]|^{2} 
    - \{w_{h};n_{h}\}[u_h]\{u_h\} \\
  &= 
  \sum_{K_{h} \in \mathcal{T}_{h}}\int_{K_{h}}
    \left( c + \gamma_{h} + 
         \half \nabla_{\Gamma_{h}} \cdot w_{h} \right) 
    u_{h}^{2} \dAh 
   + \sum_{e_{h} \in \mathcal{E}_{h}}\int_{e_{h}}  \xi_{e_{h}}|[u_{h}]|^{2} 
  \gtrsim  \hbarnorm{u_h}^2.
\end{align*}
In the final estimate we used that
$    c + \gamma_{h} + 
         \half \nabla_{\Gamma_{h}} \cdot w_{h} 
      \geq \half c > 0 $
due to our definition of $\gamma_{h}$ given
in (\ref{eq:discreteMassCoefficient}).
\end{proof} 

\subsection{Discretisation of the advection-diffusion operator}
We consider a bilinear form 
\begin{align}\label{eq:InteriorPenaltyGammahFormAdvectionDiffusion}
\CDG(u_{h},v_{h}) := \ADG(u_{h},v_{h}) + \BDG(u_{h},v_{h})
\end{align}
to discretize our problem \eqref{eq:modelproblem}. Here $\ADG(u_{h},v_{h})$ is a
discretization of $-\epsilon\triangle u$ on $\Gamma_h$. 
A number of formulations are given in Section 3.2 of \cite{antonietti2013analysis}.
One such example is the symmetric surface interior penalty (IP) method:
\begin{align}\label{eq:InteriorPenaltyGammahForm}
\ADG(u_{h},v_{h}) &= \sum_{\WH{K}_{h} \in \WH{\calT}_{h}}\int_{\WH{K}_{h}}\epsilon\nabla_{\Gamma_{h}}u_{h}\cdot \nabla_{\Gamma_{h}}v_{h} \dAh + \sum_{\WH{e}_{h} \in \WH{\calE}_{h}}\int_{\WH{e}_{h}} \beta_{\WH{e}_{h}}[u_{h}][v_{h}]\ \dsh \notag \\
& - \sum_{\WH{e}_{h} \in \WH{\calE}_{h}}\int_{\WH{e}_{h}} \big( [u_{h}] \{\epsilon \nabla_{\Gamma_{h}}v_{h}; \WH{n}_{h} \} + [v_{h}]\{\epsilon \nabla_{\Gamma_{h}}u_{h}; \WH{n}_{h}\} \big) \dsh
\end{align}
where $\beta_{\WH{e}_{h}} := \frac{\epsilon \alpha}{h}$ with $\alpha > 0$
being a (penalty) parameter at our disposition. 
This method was first considered in \cite{dedner2012analysis} for the 
elliptic problem including a mass term of the form $\epsilon uv$. The
method was shown to be 
stable for functions in $V_{h}$ in the norm given by
\begin{align} \label{eq:ellipticnormh}
\norm{\cdot}_{D}^{2} := \epsilon \sum_{K_{h} \in \mathcal{T}_{h}} \norm{\cdot}_{H^{1}(K_{h})}^{2} + 
     \sum_{e_{h} \in \mathcal{E}_{h}} \left\norm{\sqrt{\beta_{e_{h}}} [ \cdot ]\right}_{L^{2}(e_{h})}^{2}, 
\end{align}
provided that $\alpha$ is large enough. With this in mind and taking Lemma
\ref{Lemma:boundStabGammakIPAdvection} into account, we can easily prove the
stability of $\CDG$ in the following norm:
\begin{align} 
\barnorm{\cdot}^{2} &:= \norm{\cdot}_{D}^2 + \hbarnorm{\cdot}^2 \\
&=
     \norm{\cdot}^2_{L^2(\Gamma_h)} +
     \epsilon \sum_{K_{h} \in \mathcal{T}_{h}} \norm{\cdot}_{H^{1}(K_{h})}^{2} + 
     \sum_{e_{h} \in \mathcal{E}_{h}}\left(  
     \left\norm{\sqrt{\beta_{e_{h}}} [ \cdot ]\right}_{L^{2}(e_{h})}^{2} +
     \left\norm{\sqrt{\xi_{e_{h}}} [ \cdot ]\right}_{L^{2}(e_{h})}^{2}
     \right). \nonumber
\end{align}

\subsection{Bilinear form on $\Gamma$}
We end this section by defining a bilinear form on $\Gamma$ induced by $\BDG$:
\begin{align}\label{eq:motivationEquationGamma}
\BC(v_{1},v_{2}) = &\sum_{K_{h}^{l} \in
\mathcal{T}_{h}^{l}}\int_{K_{h}^{l}}  - w v_{1}  \cdot \nabla_{\Gamma}
v_{2}  + cv_{1} v_{2}\dA +  \sum_{e_{h}^{l} \in \mathcal{E}_{h}^{l}}
\int_{e_{h}^{l}}\left( \{w; n\}\{v_1\} + \xi_{e_{h}^{l}} [v_{1}] \right) [v_{2}] \ds. 
\end{align}
where $\xi_{e_{h}^{l}} := \delta_{e_h}^{-1}\frac{1}{2}|w \cdot n|$ with 
$n=n^{+}$ or $n=n^{-}$ is a surface conormals to $e_h^l$.
Note that since $w$ is assumed to be continuous on $\Gamma$ and
$n^{+}=-n^{-}$ we have $[w;n]=0$ and thus the addition term $\Cw$ vanishes
on $\Gamma$ and $\{w;n\} = w\cdot n^+ = -w\cdot n^-$.
Stability of $\BC(\cdot, \cdot)$ in $V_{h}^{l} \times V_{h}^{l}$ follows from similar arguments as the proof of Lemma \ref{Lemma:boundStabGammakIPAdvection}. The norm associated with the bilinear form $\BC$
is given by
\begin{align}\label{def:DGNormGamma} 
\hbarnorm{\cdot}^{2} := \norm{\cdot}_{L^{2}(\Gamma)}^{2} + 
    \sum_{e_{h}^{l} \in \mathcal{E}_{h}^{l}} \left\norm{\sqrt{\xi_{e_{h}^{l}}}[ \cdot ]\right}_{L^{2}(e_{h}^{l})}^{2}. 
\end{align}
Note the slight abuse of notation since here and in the following we use the same symbols 
to denote norms on $\Gamma_h$ and on $\Gamma$. It will always be clear from the
argument which norm we are refering too.

Finally, we define
\begin{align}\label{eq:InteriorPenaltyGammaForm}
\AC(v_{1},v_{2}) & =  \sum_{\WH{K}_{h}^{\ell} \in \WH{\calT}_{h}^{\ell}}\int_{\WH{K}_{h}^{\ell}}\epsilon \nabla_{\Gamma}v_{1}\cdot \nabla_{\Gamma}v_{2} \dA 
  - \sum_{\WH{e}_{h}^{\ell} \in \WH{\calE}_{h}^{\ell}}\int_{\WH{e}_{h}^{\ell}}[v_{1}]\{\epsilon \nabla_{\Gamma}v_{2}; n \} 
       + [v_{2}]\{\epsilon \nabla_{\Gamma}v_{1}; n \} \ds \notag \\
&+ \sum_{\WH{e}_{h}^{\ell} \in \WH{\calE}_{h}^{\ell}}\int_{\WH{e}_{h}^{\ell}}\beta_{\WH{e}^{l}_{h}}[v_{1}][v_{2}] \ds.
\end{align}
with $\beta_{e^{l}_{h}}:=\delta_{e_h}^{-1}\beta_{e_h}$.
Note that $\AC$ is stable in $V_{h}^{l} \times V_{h}^{l}$ with respect to the DG norm given by
\begin{align} \label{eq:ellipticnorm}
\norm{\cdot}_{D}^{2} := \epsilon \sum_{K_{h}^{l} \in \mathcal{T}_{h}^{l}} 
  \norm{\cdot}_{H^{1}(K^{l}_{h})}^{2} + 
   \sum_{e^{l}_{h} \in \mathcal{E}_{h}^{l}}
   \left\norm{\sqrt{\beta_{e^{l}_{h}}} [ \cdot ]\right}_{L^{2}(e^{l}_{h})}^{2}.
\end{align}
Furthermore it is bounded in $H^2(\Gamma)+V_h^l\times V_h^l$.
See \cite{dedner2012analysis} for further details.

Note that $u$ satisfies
\begin{equation} \label{eq:InteriorPenaltyGammaAdvection}
\CDGl(u,v):=\AC(u,v) + \BC(u,v) 
= \sum_{K_{h}^{l} \in \mathcal{T}_{h}^{l}}\int_{K_{h}^{l}}f v\ dA\
\ \forall v \in H^{2}(\Gamma) + V_{h}^{l}.
\end{equation}


\section{Convergence} \label{sec:ConvergenceAdvection}
A DG discretisation of (\ref{eq:weakH1AdvectionDiffusion}) is given as
follows: find $u_{h} \in \WH{V}_{h}$  such that
\begin{equation} \label{eq:InteriorPenaltyGammahAdvection}
\CDG(u_{h}, v_{h}) = \sum_{\WH{K}_{h} \in
\mathcal{\WH{T}}_{h}}\int_{\WH{K}_{h}}f_{h} v_{h} \dAh\ \forall v_{h} \in
\WH{V}_{h}
\end{equation}
We will consider the error of the discretization in the norm 
\begin{align*}
  \barnorm{\cdot}^2 := \hbarnorm{\cdot}^2+ \norm{\cdot}_{D}^{2} 
\end{align*}
on $\Gamma$. Before we state the main result, we make a note of key estimates relating
norms defined on $\Gamma$ to those on $\Gamma_h$:
\begin{lemma}\label{normEquivalence}
Let $v_h\in V_h$, then we have the following equivalence result:
\[ \barnorm{v_h} \lesssim \barnorm{v_h^l} \lesssim \barnorm{v_h}. \]
Furthermore, for $u\in H^2(\Gamma)$, we have that
\[ \|u^{-l}\|_{H^2(\Gamma_h)} \lesssim \|u\|_{H^2(\Gamma)}. \]
\end{lemma}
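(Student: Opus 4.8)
The plan is to establish the three stated bounds by exploiting the geometric estimates collected in Lemma \ref{Gamma2GammahSmall} together with the definitions of the lift and the area/edge deformation factors $\delta_h$ and $\delta_{e_h}$. The norm $\barnorm{\cdot}^2$ decomposes into a volume $L^2$ term, an $\epsilon$-weighted piecewise $H^1$ term, and two edge-jump terms (one weighted by $\beta_{e_h}$, one by $\xi_{e_h}$), so it suffices to handle each constituent separately and then recombine, since equivalence of each piece yields equivalence of the sum.

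\emph{First} I would treat the volume $L^2$ term. Using the change of variables $\dA(\xi(x)) = \delta_h(x)\,\dAh(x)$, one has $\norm{v_h^l}_{L^2(K_h^l)}^2 = \int_{K_h}\delta_h\,(v_h)^2\,\dAh$, and since $\norm{1-\delta_h}_{L^\infty(\Gamma_h)}\lesssim h^2$ by (\ref{eq:Gamma2GammahSmallb}), for $h$ small $\delta_h$ is bounded above and below by positive constants; hence $\norm{v_h^l}_{L^2(K_h^l)} \sim \norm{v_h}_{L^2(K_h)}$. \emph{Next}, for the $\epsilon$-weighted gradient term I would invoke the identity (\ref{eq:Gammah2GammaGradient}) relating $\nabla_{\Gamma_h}v_h$ to $\nabla_\Gamma v_h^l$ through the matrix $\mathbf{P}_h(\mathbf{I}-d\mathbf{H})\mathbf{P}$; the estimates $\norm{d}_{L^\infty}\lesssim h^2$ and $\norm{\nu-\nu_h}_{L^\infty}\lesssim h$ from (\ref{eq:Gamma2GammahSmalla})--(\ref{eq:Gamma2GammahSmallc}) show that this matrix is a small perturbation of a map that is invertible on the tangent space, so the tangential gradients are equivalent in $L^\infty$-perturbed norms, again after applying the $\delta_h$ change of variables. \emph{Then}, for the two edge terms I would use the edge change of variables $\ds(\xi(x)) = \delta_{e_h}(x)\,\dsh(x)$ with $\norm{1-\delta_{e_h}}_{L^\infty(\mathcal{E}_h)}\lesssim h^2$ from (\ref{eq:Gamma2GammahSmalld}); crucially, the penalty weights on $\Gamma$ were \emph{defined} as $\beta_{e_h^l}=\delta_{e_h}^{-1}\beta_{e_h}$ and $\xi_{e_h^l}=\delta_{e_h}^{-1}\tfrac12|w\cdot n|$, so the deformation factor cancels against the weight up to the bounded factor $\delta_{e_h}$, leaving the jump terms equivalent (the velocity weights $\tfrac12|w\cdot n|$ versus $\xi_{e_h}=\tfrac12|\{w_h;n_h\}|$ being comparable by the approximation assumption (\ref{eq:advectionGeometricEstimate1a}) and estimate (\ref{eq:Gamma2GammahSmalle})).

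\emph{Finally}, for the bound $\|u^{-l}\|_{H^2(\Gamma_h)} \lesssim \|u\|_{H^2(\Gamma)}$, I would differentiate the inverse-lift relation elementwise on each planar $K_h$, writing derivatives of $u^{-l}$ in terms of derivatives of $u$ via the chain rule applied to the map $x\mapsto\xi(x)$, and bound the resulting Jacobian and Hessian factors using the boundedness of $\mathbf{H}$, $\norm{d}_{L^\infty}\lesssim h^2$, and the change of variables $\delta_h\sim 1$; summing over elements recovers the global $H^2(\Gamma)$ norm on the right.

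The main obstacle I anticipate is the $H^1$ (gradient) term: unlike the $L^2$ and jump terms, where the deformation factors are scalar and cancel cleanly, the gradient equivalence requires controlling the \emph{matrix} $\mathbf{P}_h(\mathbf{I}-d\mathbf{H})\mathbf{P}$ and showing that its restriction to tangential fields is uniformly invertible for small $h$. The subtlety is that $\mathbf{P}_h$ and $\mathbf{P}$ project onto slightly different planes (differing by $O(h)$ via (\ref{advrelbis})), so one must verify that the composition does not degenerate; the estimates (\ref{advrelbis}) and (\ref{advrel}) are exactly what is needed to guarantee this, and invoking them carefully is the crux of the argument.
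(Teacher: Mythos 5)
Your overall plan is sound and, in substance, is the right one. Note that the paper itself does not write out a proof at all: it simply cites Lemma 3.3 of \cite{dedner2012analysis} for the norm equivalence and (2.17) of \cite{demlow2009higher} for the $H^2$ bound. What you propose is essentially a reconstruction of those cited arguments along the standard lines: change of variables with the deformation factors $\delta_h$, $\delta_{e_h}$ for the volume and edge terms, the gradient identity (\ref{eq:Gammah2GammaGradient}) together with a perturbation/invertibility argument based on (\ref{advrelbis})--(\ref{advrel}) for the $\epsilon$-weighted $H^1$ term, and the chain-rule/curvature-boundedness argument for $\|u^{-l}\|_{H^2(\Gamma_h)}\lesssim\|u\|_{H^2(\Gamma)}$, which is exactly Demlow's. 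You also correctly observe that the lifted weights $\beta_{e_h^l}=\delta_{e_h}^{-1}\beta_{e_h}$ and $\xi_{e_h^l}=\delta_{e_h}^{-1}\frac12|w\cdot n|$ were designed so that the $\delta_{e_h}$ factor cancels exactly under the edge change of variables, and your identification of the gradient term as the delicate point is apt.

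There is, however, one step that fails as literally stated: the claim that the advective edge weights $\frac12|w\cdot n|$ (lifted) and $\xi_{e_h}=\frac12|\{w_h;n_h\}|$ are \emph{comparable}. Assumption (\ref{eq:advectionGeometricEstimate1a}) and estimate (\ref{eq:Gamma2GammahSmalle}) give only \emph{additive} closeness,
\[
\bigl|\,|\{w_h;n_h\}| - |w\cdot n|^{-l}\,\bigr| \lesssim h^{p_1}+h^{2} \quad \text{on each } e_h,
\]
not a two-sided multiplicative bound: on edges where $w\cdot n$ vanishes or nearly vanishes, the ratio of the two weights is uncontrolled (one weight can be zero while the other is of size $h^{p_1}+h^2$), so the $\xi$-weighted jump seminorms are \emph{not} equivalent piece by piece, and your ``handle each constituent separately and recombine'' strategy breaks precisely here. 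The repair is standard but uses the hypothesis $v_h\in V_h$ in an essential way, which your per-term argument never invokes: bound the discrepancy between the two jump contributions by
\[
(h^{p_1}+h^{2})\sum_{e_{h} \in \mathcal{E}_{h}}\norm{[v_h]}_{L^{2}(e_{h})}^{2}
\lesssim (h^{p_1-1}+h)\norm{v_h}_{L^{2}(\Gamma_h)}^{2}
\]
via the discrete (inverse) trace inequality on the planar triangles, and absorb the result into the $L^2$ part of the \emph{full} norm on the other side, which is legitimate since the $L^2$ pieces are themselves equivalent via $\delta_h\sim 1$. As $p_1\geq 1$ for all the velocity constructions the paper considers ($p_1=1$ for the lowest-order Raviart--Thomas choice, $p_1=2$ for linear Lagrange interpolation), this yields the two-sided bound for the complete norm $\barnorm{\cdot}$ even though the $\xi$-pieces alone are not equivalent. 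With this modification your plan goes through; the remainder of your argument, including the invertibility of $\mathbf{P}_h(\mathbf{I}-d\mathbf{H})\mathbf{P}$ restricted to tangent fields for small $h$, is correct as sketched.
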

\begin{proof}
The proof of the first estimate follows similar arguments to that of Lemma 3.3 in \cite{dedner2012analysis}. The second estimate is given in (2.17) in \cite{demlow2009higher}.
\end{proof}

\begin{theorem}\label{aprioriErrorEstimateAdvectionDiffusionIP}
Let $u \in H^{2}(\Gamma)$ and $u_{h} \in V_{h}$ denote the solutions to
(\ref{eq:weakH1AdvectionDiffusion}) and (\ref{eq:motivationEquation}), respectively.  
Then, under assumptions (\ref{eq:advectionGeometricEstimate1a}) 
and (\ref{eq:advectionGeometricEstimate2}) on the discrete velocity field $w_{h}$, 
we have for $h$ small enough:
\[ \barnorm{u-u_{h}^{l}} \lesssim 
  \left( h\epsilon^{1/2} + h^{3/2} + h^{p_1} +
       s_w(h+h^{p_1-1}) \right)
       \norm{u}_{H^{2}(\Gamma)} 
\]
The constant $s_w$ is equal to $0$ if $[w_h;n_h]=0$
and equal to $1$ otherwise.
\end{theorem}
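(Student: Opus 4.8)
The plan is to establish the error estimate via Strang's second lemma, following the standard DG convergence strategy but tracking the two independent sources of error: the consistency error arising from the geometric approximation of $\Gamma$ by $\Gamma_h$ (which feeds in the powers of $h$ through Lemma \ref{Gamma2GammahSmall} and Lemma \ref{interpolationEstimate}), and the error stemming from the approximation of the velocity field $w$ by $w_h$ (which feeds in the powers $h^{p_1}$ and governs the $s_w$-dependent terms). Since all norms will ultimately be compared on $\Gamma$, I will work with the lifted quantities throughout, using the norm equivalence of Lemma \ref{normEquivalence} to pass freely between $\Gamma$ and $\Gamma_h$.

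\emph{Step 1 (coercivity and error splitting).} First I would split the error using the triangle inequality as $\barnorm{u - u_h^l} \leq \barnorm{u - (\Pi_h u^{-l})^l} + \barnorm{(\Pi_h u^{-l})^l - u_h^l}$, writing $\eta := u - (\Pi_h u^{-l})^l$ for the interpolation part and $\theta_h := \Pi_h u^{-l} - u_h \in V_h$ for the discrete part. The first term is controlled directly by the interpolation estimates of Lemma \ref{interpolationEstimate} together with the norm definition, contributing $h \epsilon^{1/2} + h^{3/2}$. For the second term I would invoke coercivity: by Lemma \ref{Lemma:boundStabGammakIPAdvection} (lifted to $\Gamma$ via the stability of $\BC$) combined with the stability of $\AC$, we have $\barnorm{\theta_h^l}^2 \lesssim \CDGl(\theta_h^l, \theta_h^l)$, reducing the estimate to bounding $\CDGl(\theta_h^l, \theta_h^l)$.

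\emph{Step 2 (Galerkin orthogonality and consistency).} Using the exact-solution identity \eqref{eq:InteriorPenaltyGammaAdvection} and the discrete equation \eqref{eq:InteriorPenaltyGammahAdvection}, I would rewrite $\CDGl(\theta_h^l,\theta_h^l)$ as a sum of three measurable discrepancies evaluated at $v_h = \theta_h$: the interpolation defect $\CDGl((\Pi_h u^{-l})^l - u, \theta_h^l)$, the \emph{geometric inconsistency} between the bilinear form $\CDGl$ on $\Gamma$ and the computed form $\CDG$ transported to $\Gamma_h$, and the \emph{velocity inconsistency} coming from replacing $w$ by $w_h$ (including the right-hand side defect $f$ versus $f_h$). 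The interpolation defect is handled by boundedness of $\CDGl$ on $H^2(\Gamma)+V_h^l$ together with Lemma \ref{interpolationEstimate}. The geometric inconsistency is estimated term-by-term by comparing the area element, edge element, projection operators, and conormals using the perturbation bounds \eqref{eq:Gamma2GammahSmallb}, \eqref{eq:Gamma2GammahSmalld}, \eqref{eq:Gamma2GammahSmalle}, \eqref{advrelbis}, and \eqref{advrel}; each such discrepancy carries at least a factor $h$ or $h^2$, yielding the $h\epsilon^{1/2}$ and $h^{3/2}$ contributions.

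\emph{Step 3 (velocity field inconsistency — the main obstacle).} The hard part will be controlling the velocity inconsistency. Here I would insert and subtract $\mathbf{P}_h w^{-l}$, splitting the difference $w_h - w$ into the approximation error bounded by assumption \eqref{eq:advectionGeometricEstimate1a} (giving $h^{p_1}$) and a geometric projection term bounded by Lemma \ref{Gamma2GammahSmall}. The delicate issues are twofold. First, the jump-correction term $\Cw$ has no analogue on $\Gamma$ (since $[w;n]=0$ there), so its contribution must be absorbed entirely into the error; this is precisely where the factor $s_w$ enters, being zero when $w_h$ is continuous across edges and one otherwise. Second, the edge and volume divergence terms $\nabla_{\Gamma_h}\cdot w_h$ and $\gamma_h$, which vanish for the exact divergence-free $w$, are bounded only by $h^{p_2}$ via \eqref{eq:advectionGeometricEstimate2}; tracking how these interact with the upwind stabilisation coefficient $\xi_{e_h}$ and with an inverse inequality on $\theta_h$ is what produces the potentially delicate $h^{p_1-1}$ term. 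I expect the careful bookkeeping of the edge integrals — keeping the $\xi_{e_h}$-weighted jump terms on the coercive side while estimating the velocity-perturbation contributions with a scaled Cauchy–Schwarz and an inverse trace inequality, so as not to lose more than one power of $h$ — to be the technically demanding core of the proof. Collecting all contributions and absorbing the coercive jump terms yields the stated bound after dividing through by $\barnorm{\theta_h^l}$.
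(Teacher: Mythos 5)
Your overall scaffolding (split into interpolation error plus discrete error, coercivity, perturbed Galerkin orthogonality estimated via the geometric perturbation bounds and the assumptions on $w_h$) matches the paper's strategy, but two of your specific choices create genuine gaps. First, by invoking coercivity of the \emph{lifted} form, $\barnorm{\theta_h^l}^2\lesssim \CDGl(\theta_h^l,\theta_h^l)$, you force the geometric/velocity inconsistency to be evaluated at the discrete solution: after using $\CDGl(u,\theta_h^l)=\int_\Gamma f\theta_h^l$ and $\CDG(u_h,\theta_h)=\int_{\Gamma_h}f_h\theta_h$, the remaining discrepancy is $\CDG(u_h,\theta_h)-\CDGl(u_h^l,\theta_h^l)$, evaluated at the \emph{discontinuous} $u_h$. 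There the edge terms do not simplify: you must bound quantities like $\int_{e_h}\bigl(\{w_h;n_h\}-\delta_{e_h}\{w;n\}\bigr)\{u_h\}[\theta_h]$ and $\int_{e_h}\bigl(\xi_{e_h}-\delta_{e_h}\xi_{e_h^l}\bigr)[u_h][\theta_h]$, whose coefficients are only $O(h^{p_1}+h^2)$ and are \emph{not} pointwise dominated by the upwind weight $\xi_{e_h}$ (which may vanish where $w\cdot n\approx 0$). Since $\hbarnorm{\cdot}$ controls only $\xi$-weighted jumps, you are driven to unweighted trace plus inverse estimates costing $h^{-1}$, producing $h^{p_1-1}+h$ pollution \emph{without} the $s_w$ prefactor. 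This destroys the theorem precisely in its most important case: for the zero-order Raviart--Thomas velocity ($p_1=1$, $[w_h;n_h]=0$, so $s_w=0$) the claimed bound is $O(h\epsilon^{1/2}+h^{3/2}+h)$, whereas your route yields an $O(1)$ term. The paper avoids this by applying coercivity on $\Gamma_h$ (Lemma \ref{Lemma:boundStabGammakIPAdvection}), using Lemma \ref{normEquivalence} only to pass norms, and defining the perturbed orthogonality functional as $E_h(v_h)=\CDG(u^{-l}-u_h,v_h)$, i.e.\ evaluated at the \emph{continuous} $u^{-l}$: then $[u^{-l}]=0$ collapses the flux to $\widehat{w_hu^{-l}}=\{w_h;n_h\}u^{-l}$, integration by parts (Lemma \ref{eq:intByParts}) moves the derivative onto $u^{-l}$, and the sole surviving edge term $I_B^4$ is proportional to $[w_h;n_h]$ — which is exactly how the $s_w$ dichotomy is obtained.

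Second, your claim that the interpolation defect is handled by ``boundedness of $\CDGl$ on $H^2(\Gamma)+V_h^l$'' is not available uniformly in $\epsilon$: the paper asserts boundedness only for the elliptic part $\AC$. The advective pairing $\int w\,\eta\cdot\nabla_\Gamma\theta_h^l$ cannot be bounded by $\barnorm{\eta}\,\barnorm{\theta_h^l}$ without an $\epsilon^{-1/2}$ factor, since gradients enter the norm only with weight $\epsilon^{1/2}$ — fatal in the advection-dominated regime and meaningless at $\epsilon=0$. The paper's Lemma \ref{ApproxErrorIterpLemma} instead uses the Brezzi-type cancellation: because $\eta=u^{-l}-\Pi_h u^{-l}$ is the $L^2$-projection error and $\Pi_{h0}w_h\cdot\nabla_{\Gamma_h}\chi$ is piecewise constant, $\int_{K_h}(\Pi_{h0}w_h\cdot\nabla_{\Gamma_h}\chi)\,\eta\dAh=0$; only the residual $\norm{\Pi_{h0}w_h-w_h}_{L^\infty}\lesssim h$ remains, which combined with the inverse inequality $|\chi|_{H^1(K_h)}\lesssim h^{-1}\norm{\chi}_{L^2(K_h)}$ gives the $\epsilon$-robust bound $h^2\norm{u}_{H^{2}(\Gamma)}\norm{\chi}_{L^2(\Gamma_h)}$. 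Without this orthogonality trick (which your sketch omits), Step 2 of your plan does not deliver the stated rate.
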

We first give an outline of the proof:
We split the error into two parts 
$\eta = u^{-l} - \Pi_{h}u^{-l}$ and $\chi = \Pi_{h}u^{-l}-u_h$
in the spirit of \cite{brezzi2004discontinuous}:
\begin{align} \label{eq:first_splitting}
\barnorm{u-u_h^{l}} \lesssim \barnorm{u^{-l}-u_h} 
  \lesssim \barnorm{\eta}+\barnorm{\chi} 
\end{align}
The first term is a projection error and can be bounded using
Lemma~\ref{interpolationEstimate} and noting that
$\barnorm{\eta}\lesssim \barnorm{\eta^l}$:
\begin{lemma}\label{BarInterpolation}
Let $\eta$ be given as before. The we have that
\[ \barnorm{\eta} \lesssim h(\epsilon^{\frac{1}{2}}+h^{\frac{1}{2}})\|u\|_{H^2(\Gamma)}
\]
for $h$ small enough.
\end{lemma}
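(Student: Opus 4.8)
The plan is to estimate $\barnorm{\eta}$ directly on $\Gamma_h$, where every triangle $K_h$ is planar and the interpolation estimates of Lemma~\ref{interpolationEstimate} apply verbatim, and to pass to the $\Gamma$-norm of $u$ only at the end through the norm equivalence of Lemma~\ref{normEquivalence} (in its piecewise-$H^2$ form, which justifies $\barnorm{\eta}\lesssim\barnorm{\eta^l}$ and is proved exactly as for $v_h\in V_h$). Writing out the definition of the norm,
\begin{align*}
\barnorm{\eta}^2 = \norm{\eta}_{L^2(\Gamma_h)}^2 + \epsilon\sum_{K_h\in\calT_h}\norm{\eta}_{H^1(K_h)}^2 + \sum_{e_h\in\calE_h}\norm{\sqrt{\beta_{e_h}}[\eta]}_{L^2(e_h)}^2 + \sum_{e_h\in\calE_h}\norm{\sqrt{\xi_{e_h}}[\eta]}_{L^2(e_h)}^2 ,
\end{align*}
I would bound the four contributions separately, using throughout that $\eta = u^{-l}-\Pi_h u^{-l}$ is exactly the $L^2$-projection error of $u^{-l}$ on each $K_h$.

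For the two volume terms I would apply Lemma~\ref{interpolationEstimate} with $k=1$, $p=2$ to get $\norm{\eta}_{L^2(K_h)}\lesssim h^2\norm{u^{-l}}_{H^2(K_h)}$ and, by the standard $H^1$-approximation property of the $L^2$ projection on shape-regular planar elements, $|\eta|_{H^1(K_h)}\lesssim h\norm{u^{-l}}_{H^2(K_h)}$; summing gives $\norm{\eta}_{L^2(\Gamma_h)}^2\lesssim h^4\norm{u^{-l}}_{H^2(\Gamma_h)}^2$ and $\epsilon\sum_{K_h}\norm{\eta}_{H^1(K_h)}^2\lesssim \epsilon h^2\norm{u^{-l}}_{H^2(\Gamma_h)}^2$. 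For the edge terms the crucial observation is that $u^{-l}\in H^2(\Gamma_h)$ is continuous across interelement edges, so $[\eta]$ reduces to a jump of the projection and is controlled by the one-sided traces; the trace estimate $\norm{\eta}_{L^2(\partial K_h)}\lesssim h^{3/2}\norm{u^{-l}}_{H^2(K_h)}$ of Lemma~\ref{interpolationEstimate} then yields $\norm{[\eta]}_{L^2(e_h)}^2\lesssim h^3\big(\norm{u^{-l}}_{H^2(K_h^+)}^2+\norm{u^{-l}}_{H^2(K_h^-)}^2\big)$. It remains to insert the two weights: since $\beta_{e_h}=\epsilon\alpha/h$ one obtains $\beta_{e_h}\norm{[\eta]}_{L^2(e_h)}^2\lesssim \epsilon h^2\norm{u^{-l}}_{H^2(K_h^{\pm})}^2$, whereas $\xi_{e_h}=\half|\{w_h;n_h\}|\lesssim 1$ is bounded (because $w_h$ is bounded, a consequence of assumption~(\ref{eq:advectionGeometricEstimate1a})) and gives only $\xi_{e_h}\norm{[\eta]}_{L^2(e_h)}^2\lesssim h^3\norm{u^{-l}}_{H^2(K_h^{\pm})}^2$; summing over the finitely many edges per element preserves these orders.

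Collecting the four bounds and invoking $\norm{u^{-l}}_{H^2(\Gamma_h)}\lesssim \norm{u}_{H^2(\Gamma)}$ from Lemma~\ref{normEquivalence}, I arrive at
\begin{align*}
\barnorm{\eta}^2 \lesssim \left( h^4 + \epsilon h^2 + h^3 \right)\norm{u}_{H^2(\Gamma)}^2 \lesssim \left( \epsilon h^2 + h^3 \right)\norm{u}_{H^2(\Gamma)}^2 ,
\end{align*}
the term $h^4$ being absorbed into $h^3$ for $h$ small; taking square roots gives $\barnorm{\eta}\lesssim (\epsilon^{1/2}h+h^{3/2})\norm{u}_{H^2(\Gamma)}=h(\epsilon^{1/2}+h^{1/2})\norm{u}_{H^2(\Gamma)}$. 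I expect the only delicate point to be the bookkeeping of the two edge weights: it is precisely the contrast between the penalty weight $\beta_{e_h}\sim\epsilon/h$ and the $O(1)$ upwind weight $\xi_{e_h}$, together with the half-order-better trace estimate, that separates the $\epsilon^{1/2}h$ and $h^{3/2}$ contributions, while everything else is routine planar interpolation combined with the geometric estimates of Lemma~\ref{Gamma2GammahSmall} and the norm equivalence of Lemma~\ref{normEquivalence}.
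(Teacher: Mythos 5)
Your proof is correct and follows essentially the same route the paper intends: bound each contribution of $\barnorm{\eta}$ directly on the planar triangulation using the $L^2$-projection estimates of Lemma~\ref{interpolationEstimate} (supplemented by the standard $H^1$ bound for the $L^2$ projection, which the paper leaves implicit), weight the edge jumps by $\beta_{e_h}\sim\epsilon/h$ and the bounded upwind weight $\xi_{e_h}\lesssim 1$, and conclude via $\|u^{-l}\|_{H^2(\Gamma_h)}\lesssim\|u\|_{H^2(\Gamma)}$ from Lemma~\ref{normEquivalence}. Your accounting of the two edge weights, yielding the separate $\epsilon^{1/2}h$ and $h^{3/2}$ contributions, is exactly the mechanism behind the stated bound.
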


Using the stability of $\ADG(\cdot,\cdot)$ and $\BDG(\cdot, \cdot)$ in 
$V_{h} \times V_{h}$, the second term in~\eqref{eq:first_splitting} can be
estimated by
\begin{align} \label{eq:second_splitting}
\barnorm{\chi}^2
  \lesssim \CDG(u^{-l}-u_{h},\chi) -  \CDG(\eta,\chi),
\end{align}
Note that since we do not directly have Galerkin orthogonality, the first term on the right-hand side of 
(\ref{eq:second_splitting}) is not zero. We will discuss this term at the
end of this section.
We can deal with the second term in~\eqref{eq:second_splitting} using the following lemma:
\begin{lemma}\label{ApproxErrorIterpLemma}
With $\eta$ and $\chi$ defined as before, we have that
\begin{align}
\CDG(\eta, \chi)&\lesssim  h(\epsilon^{1/2}+h^{1/2}) \norm{u}_{H^{2}(\Gamma)} \barnorm{\chi} 
\end{align}
for $h$ small enough.
\end{lemma}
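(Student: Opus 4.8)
The plan is to bound $\CDG(\eta,\chi) = \ADG(\eta,\chi) + \BDG(\eta,\chi)$ term by term, exploiting the fact that $\eta = u^{-l} - \Pi_h u^{-l}$ is a projection error which is $L^2$-orthogonal to $V_h$ on each element (for $k=1$), while $\chi \in V_h$. First I would handle the elliptic part $\ADG(\eta,\chi)$ using continuity of the IP form in the $\norm{\cdot}_D$ norm together with the interpolation estimates of Lemma~\ref{interpolationEstimate} (the $h^{k+1}$ volume bound and the $h^{3/2}$ trace bound), yielding a contribution of order $h\epsilon^{1/2}\norm{u}_{H^2(\Gamma)}\barnorm{\chi}$; this is the standard DG argument carried over to each planar triangle $K_h$ and is not the difficult part.

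Next I would treat the advective part $\BDG(\eta,\chi)$, whose terms are
\[
\BDG(\eta,\chi) = \khsum{-w_h\eta\cdot\nabla_{\Gamma_h}\chi + (c+\gamma_h)\eta\chi}
  + \ehsum{\widehat{w_h\eta}[\chi]} + \Cw(\eta,\chi).
\]
The volume mass term is controlled directly by the $L^2$ projection estimate $\norm{\eta}_{L^2(K_h)}\lesssim h^2\norm{u}_{H^2}$ (using $\gamma_h \equiv -\nabla_{\Gamma_h}\cdot w_h$ from \eqref{eq:eq:advectionGeometricEstimate2Corollary} and assumption \eqref{eq:advectionGeometricEstimate2} to bound $c+\gamma_h$). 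For the convective volume term $-w_h\eta\cdot\nabla_{\Gamma_h}\chi$, I would avoid losing an inverse power of $h$ from $\nabla_{\Gamma_h}\chi$ by using $L^2$-orthogonality: since $\Pi_h$ is the $L^2$ projection onto $V_h$ and $w_h$ is (piecewise) close to a smooth field, I would replace $w_h$ by its elementwise average (a constant, hence reproduced by the projection against which $\eta$ is orthogonal) and absorb the remainder via $\norm{w_h - \overline{w_h}}_{L^\infty(K_h)}\lesssim h$ (using the approximation assumption \eqref{eq:advectionGeometricEstimate1a} relating $w_h$ to $\mathbf{P}_h w^{-l}$ and smoothness of $w$). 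This trick gives the extra factor of $h$ needed so that $\norm{\eta}_{L^2}\sim h^2$ combines with $\norm{\nabla_{\Gamma_h}\chi}_{L^2}\lesssim \epsilon^{-1/2}\barnorm{\chi}$ or, in the advection-dominated regime, is bounded directly through the $\barnorm{\cdot}$ norm.

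The edge terms are where the main difficulty lies. For the upwind flux $\widehat{w_h\eta}[\chi] = \{w_h;n_h\}\{\eta\}[\chi] + \xi_{e_h}[\eta][\chi]$ and the correction $\Cw$, I would use the trace interpolation estimate $\norm{\eta}_{L^2(\partial K_h)}\lesssim h^{3/2}\norm{u}_{H^2(K_h)}$ together with Cauchy--Schwarz, pairing $\sqrt{\xi_{e_h}}[\eta]$ and $\sqrt{\xi_{e_h}}[\chi]$ so that the jump-stabilisation contribution to $\barnorm{\chi}$ can be extracted. The hard part will be controlling the factor $\{w_h;n_h\}$, whose size relative to $\xi_{e_h}$ is not uniform: on edges where the flow is nearly tangential $\xi_{e_h}$ degenerates, so I cannot simply bound $\{w_h;n_h\}\{\eta\}[\chi]$ by the stabilisation norm alone. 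The resolution is to split $\{w_h;n_h\}\{\eta\}[\chi]$ and absorb part into the $L^2(\Gamma_h)$ component of $\barnorm{\chi}$ after an inverse trace inequality, while the remainder is handled by the upwind stabilisation; summing over edges and invoking Lemma~\ref{normEquivalence} to pass between $\Gamma$ and $\Gamma_h$ norms then produces the stated bound $h(\epsilon^{1/2}+h^{1/2})\norm{u}_{H^2(\Gamma)}\barnorm{\chi}$. Throughout I would use the geometric estimates of Lemma~\ref{Gamma2GammahSmall} (in particular \eqref{eq:Gamma2GammahSmalla}, \eqref{eq:Gamma2GammahSmallb}, \eqref{eq:Gamma2GammahSmalld}) to transfer integrals and norms between the discrete and lifted surfaces without incurring additional lower-order errors.
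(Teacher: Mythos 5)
Your treatment of the volume terms is essentially the paper's: the paper also exploits $L^2$-orthogonality of $\eta$ by inserting $\Pi_{h0}w_h$ (exactly your elementwise average), using that $\nabla_{\Gamma_h}\chi$ and $\mathbf{P}_h$ are constant on each planar triangle, and it controls the remainder $(\Pi_{h0}w_h-w_h)\cdot\nabla_{\Gamma_h}\chi$ with $\norm{\Pi_{h0}w_h-w_h}_{L^\infty(K_h)}\lesssim h$. But there the paper pairs this with the \emph{inverse inequality} $|\chi|_{H^1(K_h)}\lesssim h^{-1}\norm{\chi}_{L^2(K_h)}$, giving $h\cdot h^{-1}\cdot h^2 = h^2$ with no $\epsilon$-dependence; your alternative $\norm{\nabla_{\Gamma_h}\chi}_{L^2}\lesssim\epsilon^{-1/2}\barnorm{\chi}$ yields $h^3\epsilon^{-1/2}$, which is \emph{not} bounded by $h(\epsilon^{1/2}+h^{1/2})$ uniformly in $\epsilon$ (try $\epsilon\sim h^4$), and the fallback ``bounded directly through the $\barnorm{\cdot}$ norm'' is unsubstantiated since $\barnorm{\cdot}$ only controls $\epsilon^{1/2}\norm{\nabla_{\Gamma_h}\chi}_{L^2}$. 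The more serious gap is in the edge terms, where you invent a difficulty that does not exist: by the paper's definition $\xi_{e_h}:=\frac12|\{w_h;n_h\}|$, so $|\{w_h;n_h\}|=2\xi_{e_h}$ \emph{pointwise} and the ratio you worry about is identically $2$. Hence $|\{w_h;n_h\}\{\eta\}[\chi]|\lesssim \bigl(\sqrt{\xi_{e_h}}|\{\eta\}|\bigr)\bigl(\sqrt{\xi_{e_h}}|[\chi]|\bigr)$, and since $\xi_{e_h}\lesssim 1$ the first factor is $\lesssim h^{3/2}\norm{u}_{H^2(\Gamma)}$ by the trace estimate of Lemma~\ref{interpolationEstimate}, while the second is exactly the stabilisation part of $\barnorm{\chi}$ --- no splitting, no degeneracy. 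Your proposed workaround (inverse trace inequality, absorbing into the $L^2(\Gamma_h)$ component) would produce only $\norm{w_h}_{L^\infty}\,h^{3/2}\,h^{-1/2}\norm{\chi}_{L^2(\Gamma_h)}=O(h)$ for this term, which destroys the $h^{3/2}$ rate the lemma claims in the advection-dominated limit $\epsilon\to 0$. So this step, as proposed, would fail to deliver the stated bound.

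A second concrete omission concerns $\Cw(\eta,\chi)$. This term involves the \emph{average} $\{\eta\chi\}$, not a jump, so it cannot be paired with the stabilisation seminorm at all; the trace estimate plus an inverse trace inequality on $\chi$ alone gives only $O(h)\norm{u}_{H^{2}(\Gamma)}\norm{\chi}_{L^2(\Gamma_h)}$, again too weak. The paper's proof hinges on the smallness of the velocity jump: writing $[w_h;n_h]=[\mathbf{P}_hw_h-w^{-l};n_h]+[w^{-l};\mathbf{P}^{-l}n_h-n^{-l}]$ and invoking assumption \eqref{eq:advectionGeometricEstimate1a} together with the geometric estimate \eqref{eq:Gamma2GammahSmalle} gives $\norm{[w_h;n_h]}_{L^{\infty}(\mathcal{E}_{h})}\lesssim h^{p_1}+h^2$, after which $\Cw(\eta,\chi)\lesssim (h^{p_1+1}+h^3)\norm{u}_{H^2(\Gamma)}\norm{\chi}_{L^2(\Gamma_h)}$. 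This is precisely the surface-specific ingredient (absent in the planar setting, where $[w;n]=0$ automatically) that your proposal misses; without it the lemma's estimate cannot be reached.
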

\begin{proof}
The proof is a direct extension of the corresponding result in \cite{brezzi2004discontinuous} to triangulated surfaces.
We will first show that
\begin{align*}
  \BDG(\eta,\chi) \lesssim h^{3/2}\norm{u}_{H^{2}(\Gamma)}\barnorm{\chi}~.
\end{align*}
Since $\nabla \chi \in [V_{h}]^{3}$ and $\mathbf{P}_{h}$ is constant on each $K_{h} \in \mathcal{T}_{h}$, we have by definition that
\begin{align*}
\int_{K_{h}} \left( \Pi_{h0}w_{h} \cdot \nabla_{\Gamma_{h}} \chi
\right) \eta \dAh = \int_{K_{h}} \left(\mathbf{P}_{h} \Pi_{h0}w_{h}
\cdot \nabla_{\Gamma_h} \chi \right) \eta \dAh = 0. 
\end{align*}
Using this, together with (\ref{eq:eq:advectionGeometricEstimate2Corollary}), (\ref{eq:advectionGeometricEstimate2}), an inverse inequality on $\Gamma_{h}$ and Lemma \ref{interpolationEstimate}, the
element integral term of $\BDG$ with $v_{1} = \eta$ and $v_{2} = \chi$ becomes, for $h$ small enough,
\begin{align*}
&\sum_{K_{h} \in \mathcal{T}_{h}}\int_{K_{h}}  - \eta  \left( w_{h}
\cdot \nabla_{\Gamma_{h}} \chi \right) + (c + \gamma_{h})\chi \eta \dAh = \sum_{K_{h}^{l} \in
\mathcal{T}_{h}}\int_{K_{h}} \left( \left( 
\Pi_{h0} w_{h} - w_{h}\right) \cdot \nabla_{\Gamma_{h}} \chi \right)\eta + (c - \nabla_{\Gamma_{h}} \cdot w_{h}) \chi \eta \dAh \\
&\lesssim \left( \sum_{K_{h} \in \mathcal{T}_{h}} \norm{\Pi_{h0} w_{h} -
w_{h}}_{L^{\infty}(K_{h})}|\chi|_{H^{1}(K_{h})} + \norm{\chi}_{L^{2}(K_{h})} \right) \norm{\eta}_{L^{2}(K_{h})} \\
&\lesssim \left( \sum_{K_{h} \in \mathcal{T}_{h}}  h
\norm{w_{h}}_{W^{1,\infty}(K_{h})} h^{-1} \norm{\chi}_{L^{2}(K_{h})}  +
\norm{\chi}_{L^{2}(K_{h})} \right) \norm{\eta}_{L^{2}(K_{h})}
\lesssim h^{2} \norm{u}_{H^{2}(\Gamma)} \norm{\chi}_{L^{2}(\Gamma_{h})}
\end{align*}
where, in the last estimate, we have made use of the second estimate in Lemma \ref{normEquivalence}. For the edge integrals in $\BDG$ involving the upwind flux, we first observe that
\[ | \{w_{h} ; n_{h} \}\{\eta\} | = |\{w_{h} ; n_{h} \}| | \{\eta \} | \lesssim \xi_{e_{h}} | \{\eta \} |. \]
We then have that
\[\int_{e_{h}} \{w_{h} ; n_{h}\}\{\eta\}[\chi]\dsh \lesssim
\left\norm{\sqrt{\xi_{e_{h}}}\{\eta\}\right}_{L^{2}(e_{h})}
\left\norm{\sqrt{\xi_{e_{h}}}[\chi]\right}_{L^{2}(e_{h})}.   \]
Combining this with
\[ \int_{e_{h}} \xi_{e_{h}} [\eta] [\chi] \dsh \leq
\left\norm{\sqrt{\xi_{e_{h}}}[\eta]\right}_{L^{2}(e_{h})}
\left\norm{\sqrt{\xi_{e_{h}}}[\chi]\right}_{L^{2}(e_{h})}~,  \]
we may bound the flux integral term of $\BDG$ as follows:
\begin{align*}
\sum_{e_{h} \in \mathcal{E}_{h}}\int_{e_{h}}\left( 
\{w_{h}; n_{h}\}\{\eta\} + \xi_{e_{h}} [\eta] \right) [\chi] \dsh &\leq \sum_{e_{h} \in
\mathcal{E}_{h}} \left(
\left\norm{\sqrt{\xi_{e_{h}}}\{\eta\}\right}_{L^{2}(e_{h})} +
\left\norm{\sqrt{\xi_{e_{h}}}[\eta]\right}_{L^{2}(e_{h})}  \right)
\left\norm{\sqrt{\xi_{e_{h}}}[\chi]\right}_{L^{2}(e_{h})} \\
&\lesssim h^{3/2} \norm{u}_{H^{2}(\Gamma)} \left(
\left\norm{\sqrt{\xi_{e_{h}}}[\chi]\right}_{L^{2}(e_{h})} \right)^{1/2}
\end{align*}
where, again, we have made use of the second estimate in Lemma \ref{normEquivalence} to obtain the last estimate. To obtain estimates for 
$\Cw(\cdot,\cdot)$, we first note that
\[[w_{h};n_{h}] = [\mathbf{P}_{h}w_{h} - w^{-l} ; n_{h}] + [w^{-l} ; n_{h}] = [\mathbf{P}_{h}w_{h} - w^{-l} ; n_{h}] + [w^{-l} ; \mathbf{P}^{-l} n_{h} - n^{-l}].\]
And so, by assumption (\ref{eq:advectionGeometricEstimate2}) and the geometric estimate (\ref{eq:Gamma2GammahSmalle}), we have that
\[ \norm{[w_{h};n_{h}]}_{L^{\infty}(\mathcal{E}_{h})} \lesssim h^{p_{1}} + h^{2}. \]
Hence, 
\begin{align*}
\sum_{e_{h} \in \mathcal{E}_{h}}\int_{e_h}\frac{1}{2}[w_h;n_h]\{\eta \chi\} &\lesssim \norm{[w_{h};n_{h}]}_{L^{\infty}(\mathcal{E}_{h})} \sum_{e_{h} \in \mathcal{E}_{h}} \norm{\{\eta\}}_{L^{2}(e_{h})}
\norm{\{\chi\}}_{L^{2}(e_{h})} \\ 
&\lesssim (h^{p_{1}} + h^{2}) h^{3/2} \norm{u}_{H^{2}(\Gamma)} h^{-1/2}\norm{\chi}_{L^{2}(\Gamma_{h})} \\ 
&\lesssim (h^{p_{1}+1} + h^{3}) \norm{u}_{H^{2}(\Gamma)} \norm{\chi}_{L^{2}(\Gamma_{h})}.
\end{align*} 
The first two terms of $\ADG$ (given in \eqref{eq:InteriorPenaltyGammahForm}) can be easily shown to scale like the desired final estimate by applying the projection estimate given in Lemma \ref{BarInterpolation} and making use of the second estimate in Lemma \ref{normEquivalence}, as before. For the last term in \eqref{eq:InteriorPenaltyGammahForm}, we will require the following inverse estimate, adapted from Lemma 4.4 in \cite{antonietti2013analysis}: 
\[\norm{\nabla_{\Gamma_h}\chi}_{L^2(\partial \WH{K}_{h})}\lesssim  h^{-1/2}\norm{\nabla_{\Gamma_h} \chi}_{L^2(\WH{K}_{h})}.\]
Making use of this estimate, we have that
\begin{align*}
\sum_{\WH{e}_{h}\in\WH{\calE}_{h}}\int_{\WH{e}_h}[\eta] \{\epsilon \nabla_{\Gamma_h} \chi; n_h\} \dsh &\leq \sum_{\WH{e}_{h}\in\WH{\calE}_{h}}\left\norm{\beta_{\WH{e}_{h}}^{1/2}[\eta]\right}_{L^2(\WH{e}_{h})}\left\norm{\beta_{\WH{e}_{h}}^{-1/2}\{\epsilon \nabla_{\Gamma_h} \chi; n_h\}\right}_{L^2(\WH{e}_{h})}\notag \\
&\lesssim \sum_{\WH{K}_{h}\in\WH{\calT}_h} \left\norm{\beta_{\WH{e}_{h}}^{1/2}[\eta]\right}_{L^2(\partial K_{h})}\norm{\epsilon^{1/2}\nabla_{\Gamma_h} \chi}_{L^2(\WH{K}_{h})} \\
&\lesssim h \epsilon^{1/2} \norm{u}_{H^{2}(\Gamma)} \barnorm{\chi}. 
\end{align*}
\end{proof}

For the first term on the right-hand side of (\ref{eq:second_splitting}), we require the following \emph{perturbed} Galerkin orthogonality result:
\begin{lemma} \label{PerturbedGalerkinOrthogonalityAdvection}
Let $u \in H^s(\Gamma)$, $s \geq 2$, and $u_{h} \in V_{h}$ denote the
solutions to 
\eqref{eq:InteriorPenaltyGammahAdvection} and (\ref{eq:motivationEquation}), respectively. We define
the functional $E_{h}$ on $V^{l}_{h}$ by
\begin{equation*} 
E_{h}(v_h)  := \CDG(u^{-l}-u_{h},v_{h}) = I_D+I_B+I_f
\end{equation*}
where the three contributions $E_D,E_B$, and $E_f$ meassure the variational
crime from the elliptic, the hyperbolic, and the right hand side descritization,
respecively.

Then if $w_h$ satisfies
(\ref{eq:advectionGeometricEstimate1a})
and (\ref{eq:advectionGeometricEstimate2}), we have that
\begin{align} \label{eq:Eh_quadAdvData}
  |I_D(v_h)|+|I_f(v_h)| &\lesssim
  h^{2}\norm{f}_{L^2(\Gamma)}\barnorm{v_h}~,\\
  |I_B(v_h)| &\lesssim \big(h^{2}+h^{p_1}+s_wh^{-1}(h^2+h^{p_1})\big)\norm{u}_{H^2(\Gamma)}\hbarnorm{v_h},
\end{align}
for $h$ small enough. The constant $s_w$ is defined in Theorem
\ref{PerturbedGalerkinOrthogonalityAdvection}.
\end{lemma}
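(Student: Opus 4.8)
The plan is to read off the decomposition $E_h(v_h)=I_D+I_B+I_f$ directly from the two governing equations and then estimate the pieces separately, reserving the real work for the hyperbolic term. Since $u_h$ solves (\ref{eq:InteriorPenaltyGammahAdvection}) and $u$ solves (\ref{eq:InteriorPenaltyGammaAdvection}), writing $\CDG=\ADG+\BDG$, $\CDGl=\AC+\BC$ and inserting the exact right-hand side $\khlsum{f v_h^l}=\AC(u,v_h^l)+\BC(u,v_h^l)$ into $E_h(v_h)=\CDG(u^{-l},v_h)-\khsum{f_h v_h}$ gives
\[ I_D=\ADG(u^{-l},v_h)-\AC(u,v_h^l),\quad I_B=\BDG(u^{-l},v_h)-\BC(u,v_h^l),\quad I_f=\khlsum{f v_h^l}-\khsum{f_h v_h}. \]
The term $I_f$ is a pure quadrature/geometry crime: after the change of variables it equals $\khsum{(f^{-l}\delta_h-f_h)v_h}$, which with $f_h=f^{-l}$ is controlled by $\norm{1-\delta_h}_{L^{\infty}(\Gamma_h)}\lesssim h^2$ from (\ref{eq:Gamma2GammahSmallb}), yielding $h^2\norm{f}_{L^2(\Gamma)}\barnorm{v_h}$. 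The elliptic crime $I_D$ is exactly of the form analysed in \cite{dedner2012analysis,antonietti2013analysis}: transforming $\ADG(u^{-l},v_h)$ to $\Gamma$ with (\ref{eq:Gammah2GammaGradient}) and the geometric estimates (\ref{eq:Gamma2GammahSmallb}), (\ref{eq:Gamma2GammahSmalld}), (\ref{advrel}), whose common order is $h^2$, and using $\norm{u^{-l}}_{H^2(\Gamma_h)}\lesssim\norm{u}_{H^2(\Gamma)}$ from Lemma \ref{normEquivalence} together with elliptic regularity to convert $\norm{u}_{H^2}$ into data, gives the stated bound.

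The crux is $I_B$, and the key preliminary observation is that $u$ is continuous on $\Gamma$, so $u^{-l}$ is single-valued across edges, i.e. $[u^{-l}]=0$ and $\{u^{-l}\}=u^{-l}$. Thus the jump stabilisation in the upwind flux drops out, $\widehat{w_h u^{-l}}=\{w_h;n_h\}u^{-l}$, and $\Cw(u^{-l},v_h)=\sum_{e_h}\int_{e_h}\half[w_h;n_h]u^{-l}\{v_h\}$. I would then integrate the element advection term by parts on each flat $K_h$ and reassemble the edge contributions with Lemma \ref{eq:intByParts}. Two cancellations make this worthwhile: the boundary contribution $\{w_h;n_h\}u^{-l}[v_h]$ exactly cancels the flux term, and, crucially, the element integrand $\nabla_{\Gamma_h}\!\cdot(w_h u^{-l})+(c+\gamma_h)u^{-l}$ collapses to $w_h\cdot\nabla_{\Gamma_h}u^{-l}+cu^{-l}$ because $\gamma_h=-\nabla_{\Gamma_h}\!\cdot w_h$ for $h$ small by (\ref{eq:eq:advectionGeometricEstimate2Corollary}). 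The only surviving edge term is $-\half\sum_{e_h}\int_{e_h}[w_h;n_h]u^{-l}\{v_h\}$, which carries the factor $s_w$ since it vanishes when $[w_h;n_h]=0$. Performing the corresponding continuous integration by parts on $\BC(u,v_h^l)$, where $n^+=-n^-$ and $\nabla_\Gamma\!\cdot w=0$, collapses it entirely to $\khlsum{(w\cdot\nabla_\Gamma u+cu)v_h^l}$.

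After this restructuring I am left with
\[ I_B=\khsum{\big(w_h\cdot\nabla_{\Gamma_h}u^{-l}+cu^{-l}\big)v_h}-\khlsum{(w\cdot\nabla_\Gamma u+cu)v_h^l}-\half\sum_{e_{h}\in\mathcal{E}_{h}}\int_{e_{h}}[w_{h};n_{h}]u^{-l}\{v_{h}\}\dsh. \]
I would transform the lifted integral back to $\Gamma_h$ via $\dA=\delta_h\dAh$ and (\ref{eq:Gammah2GammaGradient}), so that $\nabla_{\Gamma_h}u^{-l}=\mathbf{P}_h(\mathbf{I}-d\mathbf{H})\mathbf{P}(\nabla_\Gamma u)^{-l}$, and compare integrands pointwise. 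The mass contribution leaves $cu^{-l}(1-\delta_h)v_h=O(h^2)$ by (\ref{eq:Gamma2GammahSmallb}). In the advection contribution I replace $w_h$ by $\mathbf{P}_h w^{-l}$ at cost $h^{p_1}$ from (\ref{eq:advectionGeometricEstimate1a}), drop $d\mathbf{H}$ at cost $h^2$ from (\ref{eq:Gamma2GammahSmalla}), and use tangentiality $\mathbf{P}w^{-l}=w^{-l}$, $\mathbf{P}(\nabla_\Gamma u)^{-l}=(\nabla_\Gamma u)^{-l}$ together with $\mathbf{P}\mathbf{P}_h\mathbf{P}=\mathbf{P}+O(h^2)$ from (\ref{advrel}); the leading term then matches $w^{-l}\cdot(\nabla_\Gamma u)^{-l}$ and the remainder is $O(h^2+h^{p_1})$ in $L^\infty$. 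Since in every one of these error terms the derivative sits on the \emph{smooth} factor $u$, I bound it by $\norm{u}_{H^1}\norm{v_h}_{L^2(\Gamma_h)}$ with no inverse inequality, giving the clean $(h^2+h^{p_1})\norm{u}_{H^2}\hbarnorm{v_h}$ part. For the residual edge term I use $\norm{[w_h;n_h]}_{L^{\infty}(\mathcal{E}_h)}\lesssim h^{p_1}+h^2$ (established in the proof of Lemma \ref{ApproxErrorIterpLemma}) and two trace/inverse-trace estimates, one for $u^{-l}$ and one for the polynomial $v_h$, each costing $h^{-1/2}$, to obtain the $s_w h^{-1}(h^2+h^{p_1})\norm{u}_{H^2}\hbarnorm{v_h}$ contribution.

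The main obstacle is precisely this integration-by-parts restructuring of $I_B$: it is what lets the velocity-approximation error $h^{p_1}$ multiply $\nabla_\Gamma u$ (bounded by $\norm{u}_{H^1}$) rather than $\nabla_{\Gamma_h}v_h$, for which one would be forced to spend an inverse inequality and thereby degrade the estimate from $h^{p_1}$ to $h^{p_1-1}$ in the non-jump part. The accompanying delicate bookkeeping is to verify that the three cancellations — flux against boundary term, $\gamma_h$ against $\nabla_{\Gamma_h}\!\cdot w_h$, and the conormal cancellation on $\Gamma$ — are exact, so that only the genuinely small geometric and velocity perturbations, plus the single $[w_h;n_h]$ edge term, survive.
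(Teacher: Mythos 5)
Your proposal is correct and follows essentially the same route as the paper's proof: the same splitting into $I_D$, $I_B$, $I_f$, the same reduction of the fluxes via $[u^{-l}]=[u]=0$, the same discrete and continuous integrations by parts with the three exact cancellations (flux against boundary term, $\gamma_h+\nabla_{\Gamma_h}\cdot w_h=0$ for small $h$, and $[w;n]=0$ on $\Gamma$), the same use of (\ref{eq:Gamma2GammahSmalla}), (\ref{eq:Gamma2GammahSmallb}), (\ref{advrel}) and (\ref{eq:advectionGeometricEstimate1a}) to compare integrands with the derivative kept on $u$, and the same trace-based $h^{-1}(h^{2}+h^{p_1})$ bound for the surviving $[w_h;n_h]$ edge term. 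The only (immaterial) difference is directional: you pull the lifted integrals back to $\Gamma_h$, whereas the paper lifts the discrete volume integrals up to $\Gamma$ before comparing.
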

Before we give its full proof, we will complete that of
Theorem~\ref{aprioriErrorEstimateAdvectionDiffusionIP} 
assuming this result. Starting again with the splitting
(\ref{eq:first_splitting}) and \eqref{eq:second_splitting}:
\begin{align*}
\barnorm{u-u_h^{l}} 
 &\lesssim \barnorm{u^{l}-u_h}
  \lesssim \barnorm{\eta} + 
  \left(\CDG(u-u_{h}^{l},\chi) - \CDG(\eta,\chi)\right)\barnorm{\chi}^{-1} \\
  &\lesssim \left(h\epsilon^{\frac{1}{2}} + h^{\frac{3}{2}}\right)\|u\|_{H^2}
        + \left(h^2+\epsilon^{-\frac{1}{2}}h^2\right)\norm{f}_{L^2}
        + \left(h\epsilon^{\frac{1}{2}}+h^{\frac{3}{2}}\right)\|u\|_{H^2}
\end{align*}
Which completes the proof of our main Theorem.


\begin{proof}[Proof of Lemma \ref{PerturbedGalerkinOrthogonalityAdvection}]
Using the definition of $u_h$ and the fact that $u$ solves the variational
problem \eqref{eq:weakH1AdvectionDiffusion} we have 
\begin{align*}
E_{h}(v_h) &= \CDG(u^{-l}-u_{h},v_{h}) 
            = \CDG(u^{l},v_h) - \int_{\Gamma_h} f_hv_h + \int_{\Gamma} fv_h^l
               - \CDGl(u,v_h^l) \\
           &= \ADG(u^{l},v_h) - \AC(u,v_h^l) + \BDG(u^l,v_h) - \BC(u.v_h^l) 
              + \int_{\Gamma} fv^l_h - \int_{\Gamma_h} f_hv_h
            = I_D+I_B+I_f
\end{align*}
For $I_f$ we can simply write using that $f_h=f^{-l}_h$:
\begin{align*}
  I_f &= \int_{\Gamma} \big(1-\delta_h^{-1}\big)fv^l_h 
      \lesssim h^2\norm{f}_{L^2(\Gamma)}
\end{align*}
The error coming from the diffusion part was throughly studied in
\cite{dedner2012analysis} where it was shown to scale satisfy the desired
bound.
Finally, we rewrite the error term comming from the advection discretization:
\begin{align*}
  I_B &= \sum_{K_{h} \in \mathcal{T}_{h}}\int_{K_{h}}  - w_{h} u^{-l} \cdot \nabla_{\Gamma_{h}} v_{h} + 
           \left( c + \gamma_{h}\right) u^{-l} v_{h} \dAh +  
         \sum_{e_{h} \in \mathcal{E}_{h}}\int_{e_{h}}
            \widehat{w_{h} u^{-l}} [v_{h}] +
            \frac{1}{2}[w_h;n_h]u^{-l}{v_h}\dsh\\
      &\qquad +
               \Cw(u^{-l},v_h)  - 
         \sum_{K_{h} \in \mathcal{T}_{h}}\int_{K^l_{h}}  - w u \cdot \nabla_{\Gamma} v^l_{h} + 
           c u v^l_{h} \dA -
         \sum_{e_{h} \in \mathcal{E}_{h}}\int_{e^l_{h}}\widehat{w u}[v^l_{h}] \ds
\end{align*}
Noting that $u,u^{-l}$ are continuous functions the numerical fluxes reduce
to
\[ 
  \widehat{w_{h} u^{-l}} = \{ w_{h} ; n_{h} \}u^{-l}~, \qquad 
     \widehat{w u} = \{ w ; n \}u
\]
Next we use the integration by parts formula 
\eqref{eq:weakH1AdvectionDiffusion} 
again taking into
account that $[u]=[u^{-l}]=0$, arriving at:
\begin{align*}
  I_B &= \sum_{K_{h} \in \mathcal{T}_{h}}\int_{K_{h}}  w_{h}
         v_{h} \cdot \nabla_{\Gamma_{h}} u^{-l} + 
         \left( c + \gamma_{h}+\nabla_{\Gamma_h}\cdot w_h\right) u^{-l} v_{h} \dAh 
         - 
         \sum_{e_h\in\mathcal{E}_h}\int_{e_h} \frac{1}{2}[w_h;n_h]u^{-l}\{v_h\}  \\
      &\qquad-
         \sum_{K_{h} \in \mathcal{T}_{h}}\int_{K^l_{h}}  w
         v^l_{h} \cdot \nabla_{\Gamma} u + 
         c u v^l_{h} \dA 
\end{align*}
Note that for $h$ small enough $\gamma_{h}+\nabla_{\Gamma_h}\cdot w_h=0$
based on the definition \eqref{eq:discreteMassCoefficient}
and using that $\nabla_{\Gamma_h}\cdot u_h\to 0$.
We now lift the volume integrals on $\Gamma_h$ to $\Gamma$
\begin{align*}
  I_B &= \sum_{K_{h} \in \mathcal{T}_{h}}\int_{K^l_{h}}  \delta_h^{-l}w^l_{h}
         v^l_{h} \cdot P_h(I-dH)P\nabla_{\Gamma} u + 
         c u v^l_{h} \dA  
       - \sum_{e_h\in\mathcal{E}_h}\int_{e^l_h} \frac{1}{2}[w_h;n_h]u^{-l}\{v_h\} \\ 
      &\qquad-
         \sum_{K_{h} \in \mathcal{T}_{h}}\int_{K^l_{h}}  w
         v^l_{h} \cdot \nabla_{\Gamma} u + 
         c u v^l_{h} \dA \\
      &= \sum_{K_{h} \in \mathcal{T}_{h}}\int_{K^l_{h}}  (\delta_h-1)
         \big( w^l_{h} v^l_{h} \cdot P_h(I-dH)P\nabla_{\Gamma} u \dA \\
      &\qquad+ \sum_{K_{h} \in \mathcal{T}_{h}}\int_{K^l_{h}} 
         \big(w^l_{h}-w\big) v^l_{h} \cdot P_h(I-dH)P\nabla_{\Gamma} u \dA \\
      &\qquad+ \sum_{K_{h} \in \mathcal{T}_{h}}\int_{K^l_{h}} 
               \big( w v^l_{h} \cdot (P_h(I-dH)P-I)\nabla_{\Gamma} u \dA \\
      &\qquad
      - \sum_{e_h\in\mathcal{E}_h}\int_{e^l_h} \frac{1}{2}[w_h;n_h]u^{-l}\{v_h\}  
      = I_B^1+I_B^2+I_B^3+I_B^4
\end{align*}
We will bound each of these terms using results from
Lemma~\eqref{Gamma2GammahSmall} and Assumptions
(\ref{eq:advectionGeometricEstimate1a})
and (\ref{eq:advectionGeometricEstimate2}):
\begin{align*}
I_B^1 &\lesssim
        h^2\norm{w_h^l}_{L^\infty(\Gamma)}\norm{P_h(I-dH)P}_{L^\infty(\Gamma)} 
          \sum_{K_{h} \in \mathcal{T}_{h}}\int_{K^l_{h}} 
          \norm{v_h^l}_{L^2(K^l_h)}\norm{\nabla u}_{L^2(K_h^l)} 
      \lesssim h^2\hbarnorm{v_h^l}\norm{u}_{H^2(\Gamma)}
\end{align*}
\begin{align*}
I_B^2 &\lesssim 
        \sum_{K_{h} \in \mathcal{T}_{h}}\int_{K^l_{h}} 
          \norm{v^l_h}_{L^2(K^l_h)} 
          \norm{(w^l_{h}-w) \cdot P_h(I-dH)P\nabla_{\Gamma} u}
      \lesssim h^{p_1}\hbarnorm{v_h^l}\norm{u}_{H^2(\Gamma)}
\end{align*}
For the next estimate we use that $w$ is tangent to $\Gamma$ so that $w=Pw$
and that $P$ is symmetric:
\begin{align*}
I_B^3 &=
        \sum_{K_{h} \in \mathcal{T}_{h}}\int_{K^l_{h}} 
               \big(v_h^l  w \cdot (PP_h(I-dH)P-P)\nabla_{\Gamma} u \dA  \\
      &=
        \sum_{K_{h} \in \mathcal{T}_{h}}\int_{K^l_{h}} v_h^l
               w \cdot (PP_hP-P)\nabla_{\Gamma} u
             + d w \cdot P_hHP\nabla_{\Gamma} u \dA 
      \lesssim h^2\hbarnorm{v_h^l}\norm{u}_{H^2(\Gamma)}
\end{align*}
It remains to bound the term on the skeleton of the grid. Note first that
\[
  [w_h;n_h] = ((w_h^+-w^{-l})\cdot n_h^+) + ((w_h^--w^{-l})\cdot n_h^-)
              +w^{-l}\cdot(n_h^++n_h^-) 
            \lesssim h^{p_1}+h^2
\]
using our Assumptions on $w_h$ and scalling results from
Lemma~\eqref{Gamma2GammahSmall}. This leads to the following estimate
\begin{align*}
I_B^4 &\lesssim
      (h^{p_1}+h^2) \sum_{e_h\in\mathcal{E}_h}\norm{u^{-l}}_{L^2(e_h)}\norm{v_h}_{L^2(e_h)}
      \lesssim 
      (h^{p_1}+h^2)h^{-1}\norm{u^{-l}}_{L^2(\Gamma_h)}\norm{v_h}_{L^2(\Gamma_h)}
\end{align*}
This completes the proof.  
\end{proof}

\section{Construction of discrete velocity field}\label{sec:advectionConstruction}
We will now attempt to justify the assumptions we have made on $w_{h}$ by constructing a 
discrete velocity field which satisfies assumptions 
(\ref{eq:advectionGeometricEstimate1a}) 
and (\ref{eq:advectionGeometricEstimate2}). We will first discuss why we do not
simply take $w_h=w^{-l}$ and then describe two alternative approaches,

\subsection{Downward lift of velocity field}

Consider the simplest choice for a discrete velocity field given by
$w_h:=w^{-l}$. Note that due to the definition of our bilinear form we can
always write $P_hw_h$ instead of $w_h$, so using the projection onto the
tangent planes of the triangles is not required in the definition of the
discrete velocity field. 

In general one can neither expect this choice to lead to a divergence free
field on each triangle, nor that the normal jumps across edges will vanish,
i.e., $[w_h;n_h]=w^{-l}(n_h^++n_h^-)\neq 0$.
We added a number of terms to take into account that the velocity field
$w_h$ is not divergence free and has non continuous normal components over
the element edges.
Simply defining the discrete bilinear form by taking $w_h=w^{-l}$ in $\BDG$, 
dropping the extra $\Cw$ term and taking $\gamma_h=0$ does not lead to a
unconditionally stable scheme. In fact the matrix resulting from such a scheme will
not be positive-definite independently of $h$. To see this consider the
bilinear form
\begin{align*}
  \sum_{K_{h} \in \mathcal{T}_{h}}\int_{K_{h}}  - w^{-l}u_{h} \cdot \nabla_{\Gamma_{h}} v_{h} + u_{h} v_{h} \dAh +  
    \sum_{e_{h} \in \mathcal{E}_{h}}\int_{e_{h}} \widehat{w^{-l} u_{h}} [v_{h}] \dsh 
\end{align*}      
Integrating by parts and using Lemma \ref{eq:intByParts} as
in the proof of Lemma \ref{Lemma:boundStabGammakIPAdvection} choosing 
$u_{h} \equiv 0$ for every $K_{h} \in \mathcal{T}_{h}$ except for two elements
$K_{h}^{+}$ and $K_{h}^{-}$ for which $e_{h} = K_{h}^{+} \bigcap
K_{h}^{-}$. Furthermore, we can assume without loss of generality that $n_{h}^{+} = (-1,0,0)$, $n_{h}^{-} = (\cos(q), \sin(q), 0)$ with $q \in (0, 2 \pi)$. Note that unless $q = 0, 2 \pi$, we have that $n_{h}^{+} \not = - n_{h}^{-}$. The velocity $w^{-l}$ at $e_{h}$ is assumed to be $(-1,0,0)$, so that $w^{-l} \cdot n_{h}^{+} = 1 > 0$ and $w^{-l} \cdot n_{h}^{-} = -\cos(q) < 0$. Finally, we assume that $u_{h}^{+} = u_{h}^{-} = 1$ so that $[u_{h}] = 0$ on $e_{h}$. With these conditions, the stability of (\ref{eq:motivationEquation}) boils down to showing that
\[- \half \sum_{K_{h} \in \mathcal{T}_{h}} \int_{\partial K_{h}}\left( w^{-l} \cdot n_{K_{h}} \right) u_{h}^{2} \dsh \geq 0 \]
which, from Lemma \ref{eq:intByParts} and the above conditions, is equivalent to showing that
\[- \half \int_{e_{h}} [w^{-l} u_{h}^{2}; n_{h}] \dsh\geq 0. \] 
Notice that the numerical flux does not appear given that it is scaled with $[u_{h}] = 0$, and thus cannot influence the sign of the above quantity. Expanding the expression, we have that
\begin{align*}
- \half \int_{e_{h}} [w^{-l} u_{h}^{2}; n_{h}] \dsh &= - \half \int_{e_{h}} w^{-l} u_{h}^{2+} \cdot n_{h}^{+} + w^{-l} u_{h}^{2-} \cdot n_{h}^{-}\dsh = \half |e_{h}| \left( \cos(q) - 1 \right) < 0.
\end{align*}
Hence, in general, whenever $n_{h}^{+} \not = -n_{h}^{-}$, $h$-independent positive-definiteness of the matrix resulting from the scheme may not hold, regardless of the choice of the modified upwind flux.

So in fact both $\gamma_h$ and $\Cw$ are important terms to make the scheme
positive definite independent of $h$. Since the $\Cw$ will not vanish, our
error estimate indicate a suboptimal convergence rate (of course $p_1$ is arbitrarily
large), although as already pointed out, this is not confirmed by our numerical experiments. 
A more severe problem is, that evaluating $\gamma_h$ requires the computation of
$\nabla_{\Gamma_h}\cdot w^{-l}$, which requires derivatives of the lifting
operator and thus more information about the surface $\Gamma$ then we wish
to have in our numerical scheme. We therefore will not use this choice in
our numerical experiments, but tests indicate that this choice is
comparable to the other choices described in the following.

\subsection{Lagrange interpolation}

We can use a Lagrange interpolation of $w^{-l}$ on $\Gamma_h$ to define
$w_h$. This is easy to implement and evaluating both $\gamma_h$ and $\Cw$
is not problematic. According to our error analysis the approximation order
of linear finite elements is sufficient ($p_1=2$) but of course $\Cw$ will in 
general not vanish and thus our error estimate will be suboptimal. In our numerical
experiments we still observed an optimal rate while a piecewise constant interpolation 
does not lead to optimal results. We omitted the detail of these
experiments in this paper.

\subsection{Surface Raviart-Thomas interpolant}
Our next choice avoids the problem of suboptimality by constructing a
velocity field with $[w_h;n_h]=0$. To this end we make use of 
a \emph{Raviart-Thomas}-type interpolant of $w^{-l}$, 
which we will refer to as the surface Raviart-Thomas interpolant.

Let $F_{K_{h}}$ denote the mapping from the reference element $K$ to $K_{h}$. Then we have that $\nabla F_{K_{h}} = (e_{0}, e_{1}) \in \mathbb{R}^{3 \times 2}$ where $e_{0}$ and $e_{1}$ are two edges of $K_{h}$ intersecting at the vertex $x_{0}$. We first define the local spaces 
\[ \mathbb{P}_{RT}^{q}(K_{h}) := \left\{ s_{h}(x) := \nabla F_{K_{h}}(F_{K_{h}}^{-1}(x)) p\left( F_{K_{h}}^{-1}(x) \right) ,\ p \in [\mathbb{P}^{q}(K)]^2 \right\}.\]
We next define the local Raviart-Thomas space of order $q$ on $K_{h}$ to be given by
\[   RT^{q}(K_{h}) := \left\{ \bar{w}_{h}(x) := s_{h}(x) + (x-x_0) t\left(F_{K_{h}}^{-1}(x)\right) ,\ s_{h} \in \mathbb{P}_{RT}^{q}(K_{h}),\ t \in \mathbb{P}^{q}( K ) \right\}. \]
It is clear from the definition of $RT^{q}(K_{h})$ that any function $ \bar{w}_{h} \in RT^{q}(K_{h})$ for every $K_{h} \in \mathcal{T}_{h}$ is tangential to $\Gamma_{h}$. Using the convention that the conormal to $e_{h} \subset \partial K_{h}$ is $n_{h}^{+}$, 
the local degrees of freedom of $\bar{w}_{h} \in RT^{q}(K_{h})$ are given by
\begin{align}\label{eq:RTDOFa}
&\int_{e_{h}} \bar{w}_{h} \cdot n_{h}^{+} p_{q}\dsh\ \forall p_{q} \in \mathbb{P}^{q}(e_{h}), e_{h} \subset \partial K_{h}, \\
&\int_{K_{h}} \bar{w}_{h} \cdot \mathbf{p}_{q-1}\dsh\ \forall \mathbf{p}_{q-1} \in \mathbb{P}_{RT}^{q-1}(K_{h})  \label{eq:RTDOFb}.
\end{align}
We then define, for $w^{-l} \in [W^{1,\infty}(K_{h})]^{3}$, the local surface Raviart-Thomas interpolant of order $q$ to be $\Pi_{K_{h}}^{q} w^{-l} \in RT^{q}(K_{h})$ satisfying
\begin{align}\label{eq:RTDefa}
&\int_{e_{h}} \Pi_{K_{h}}^{q} w^{-l} \cdot n_{h}^{+} p_{q}\dsh = \int_{e_{h}} w^{-l} \cdot n_{e_{h}}^{+} p_{q}\dsh\ \forall p_{q} \in \mathbb{P}^{q}(e_{h}), e_{h} \subset \partial K_{h}, \\
&\int_{K_{h}} \Pi_{K_{h}}^{q} w^{-l} \cdot \mathbf{p}_{q-1}\dsh = \int_{K_{h}} w^{-l} \cdot \mathbf{p}_{q-1}\dsh\ \forall \mathbf{p}_{q-1} \in \mathbb{P}_{RT}^{q-1}(K_{h}) \label{eq:RTDefb}.
\end{align}
Here, the ``average'' conormals $n_{e_{h}}^{+/-}$ are given by $n_{e_{h}}^{+/-} := \pm \frac{\frac{1}{2}(n_{h}^{+}- n_{h}^{-})}{|\frac{1}{2}( n_{h}^{+}- n_{h}^{-})|}$. 

\begin{remark}
Notice that this definition differs from that of the local classical
Raviart-Thomas interpolant in the way we have defined the right-hand side
of (\ref{eq:RTDefa}). We have to use what we call the ``average'' conormals
$n_{e_{h}}^{+/-}$ instead of the standard conormals $n_{h}^{+/-}$ because
they satisfy $n_{e_{h}}^{+} = -n_{e_{h}}^{-}$.
From here on, we will refer to the local classical Raviart-Thomas interpolant by $\WT{\Pi}_{K_{h}}^{q} w^{-l}$.  
\end{remark}

\begin{lemma}\label{lemma:assumptionConfirmation1}
Let $\Pi_{K_{h}}^{q} w^{-l}$ be the local surface Raviart-Thomas interpolant of $w^{-l} \in [W^{1,\infty}(K_{h})]^{3}$, defined as in (\ref{eq:RTDefa})--(\ref{eq:RTDefb}). Then we have that 
\[\Pi_{K_{h}}^{q} w^{-l} \cdot n_{h}^{+} = -\Pi_{K_{h}}^{q} w^{-l} \cdot n_{h}^{-} \]
for each $e_{h} \in \mathcal{E}_{h}$.  
\end{lemma}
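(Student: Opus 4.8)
The plan is to reduce the asserted identity to the elementary sign relation $n_{e_{h}}^{+} = -\,n_{e_{h}}^{-}$ built into the average conormals, after first establishing that the conormal trace of the interpolant on an edge is a polynomial of degree $q$ determined \emph{solely} by the edge degrees of freedom~(\ref{eq:RTDefa}). I read the statement in its natural global sense: writing $w_{h}|_{K_{h}} = \Pi_{K_{h}}^{q} w^{-l}$ and letting $K_{h}^{+}, K_{h}^{-}$ be the two triangles meeting along $e_{h}$, the claim is the no-jump property $w_{h}^{+}\cdot n_{h}^{+} = -\,w_{h}^{-}\cdot n_{h}^{-}$, i.e.\ $[w_{h};n_{h}]=0$.

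\textbf{Step 1 (the conormal trace is a degree-$q$ polynomial on $e_{h}$).} First I would show that every $\bar{w}_{h} \in RT^{q}(K_{h})$ satisfies $\bar{w}_{h}\cdot n_{h}^{+}|_{e_{h}} \in \mathbb{P}^{q}(e_{h})$. Using the affineness of $F_{K_{h}}$ one computes $x - x_{0} = \nabla F_{K_{h}}\, F_{K_{h}}^{-1}(x)$, whence any $\bar{w}_{h} \in RT^{q}(K_{h})$ can be written as $\bar{w}_{h} = \nabla F_{K_{h}}\,\hat{w}$ with $\hat{w} = p + \hat{x}\,t \in \widetilde{RT}^{q}(K)$ the corresponding reference Raviart--Thomas function ($\hat{x} = F_{K_{h}}^{-1}(x)$). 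Since $n_{h}^{+}$ is constant on the planar edge $e_{h}$, is in-plane, and is orthogonal to the edge tangent $\nabla F_{K_{h}}\hat{\tau}$, its pull-back $(\nabla F_{K_{h}})^{\mathsf{T}} n_{h}^{+}$ is orthogonal to $\hat{\tau}$ and hence a (nonzero) constant multiple of the reference conormal $\hat{n}$. Therefore $\bar{w}_{h}\cdot n_{h}^{+} = \hat{w}\cdot (\nabla F_{K_{h}})^{\mathsf{T}} n_{h}^{+} \propto \hat{w}\cdot\hat{n}$, and the standard property $\hat{w}\cdot\hat{n}|_{\hat{e}}\in\mathbb{P}^{q}(\hat{e})$ of reference Raviart--Thomas functions transfers, through the affine edge parametrisation, to $\bar{w}_{h}\cdot n_{h}^{+}|_{e_{h}}\in\mathbb{P}^{q}(e_{h})$.

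\textbf{Steps 2--3 (identify the trace as an $L^{2}$ projection and compare the two triangles).} Because $\Pi_{K_{h}}^{q}w^{-l}\cdot n_{h}^{+}|_{e_{h}}\in\mathbb{P}^{q}(e_{h})$ by Step 1, and the relations~(\ref{eq:RTDefa}) prescribe precisely its moments against every $p_{q}\in\mathbb{P}^{q}(e_{h})$, the edge trace is fixed by the edge functionals alone (the interior conditions~(\ref{eq:RTDefb}) and the data on the other edges are irrelevant) and equals the $L^{2}(e_{h})$-projection $P_{e_{h}}$ onto $\mathbb{P}^{q}(e_{h})$:
\begin{equation*}
\Pi_{K_{h}}^{q}w^{-l}\cdot n_{h}^{+}\big|_{e_{h}} = P_{e_{h}}\big(w^{-l}\cdot n_{e_{h}}^{+}\big).
\end{equation*}
Applying this on each of $K_{h}^{+}$ and $K_{h}^{-}$, which share the same planar edge $e_{h}$, the same (continuous) trace $w^{-l}|_{e_{h}}$, the same arclength measure, and hence the \emph{same} projector $P_{e_{h}}$, together with $n_{e_{h}}^{+} = -\,n_{e_{h}}^{-}$ from the definition of the average conormals and the linearity of $P_{e_{h}}$, yields
\begin{equation*}
w_{h}^{+}\cdot n_{h}^{+} = P_{e_{h}}\big(w^{-l}\cdot n_{e_{h}}^{+}\big) = -\,P_{e_{h}}\big(w^{-l}\cdot n_{e_{h}}^{-}\big) = -\,w_{h}^{-}\cdot n_{h}^{-},
\end{equation*}
which is the claim.

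I expect the only genuine obstacle to be Step 1: one must check that the conormal trace of the \emph{surface} (rather than planar) Raviart--Thomas function is really a degree-$q$ polynomial and is governed by the edge functionals only, so that the interior degrees of freedom cannot interfere. Once this locality of the edge trace is in hand everything downstream is immediate, the decisive ingredient being the sign reversal $n_{e_{h}}^{+} = -\,n_{e_{h}}^{-}$ — precisely the reason definition~(\ref{eq:RTDefa}) employs the average conormals $n_{e_{h}}^{+/-}$ rather than the standard conormals $n_{h}^{+/-}$, which in general do \emph{not} satisfy $n_{h}^{+} = -\,n_{h}^{-}$.
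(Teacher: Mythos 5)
Your proposal is correct and follows essentially the same route as the paper: matching the edge moments prescribed by (\ref{eq:RTDefa}) on both triangles, exploiting $n_{e_{h}}^{+}=-n_{e_{h}}^{-}$, and upgrading the moment identity to a pointwise one via the fact that $\Pi_{K_{h}}^{q}w^{-l}\cdot n_{h}^{+}|_{e_{h}}\in\mathbb{P}^{q}(e_{h})$. The only difference is cosmetic: you prove this trace property by an explicit pull-back to the reference element (and phrase the conclusion as equality of $L^{2}(e_{h})$-projections), whereas the paper simply cites Proposition 3.2 of \cite{fortin1991mixed} and states the moment cancellation directly.
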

\begin{proof}
By (\ref{eq:RTDefa})--(\ref{eq:RTDefb}) and using the fact that $n_{e_{h}}^{+} = -n_{e_{h}}^{-}$, we have that

\begin{align*}
\int_{e_{h}} \Pi_{K_{h}}^{q} w^{-l} \cdot n_{h}^{+} p_{q}\dsh = \int_{e_{h}} w^{-l} \cdot n_{e_{h}}^{+} p_{q}\dsh &= - \int_{e_{h}} w^{-l} \cdot n_{e_{h}}^{-} p_{q}\dsh = -\int_{e_{h}} \Pi_{K_{h}}^{q} w^{-l} \cdot n_{h}^{-} p_{q}\dsh.  
\end{align*}
It follows that
\begin{align*}
\int_{e_{h}} \left( \Pi_{K_{h}}^{q} w^{-l} \cdot n_{h}^{+} +  \Pi_{K_{h}}^{q} w^{-l} \cdot n_{h}^{+}\right)  p_{q}\dsh = 0 
\end{align*}
for every $p_{q} \in \mathbb{P}^{q}(e_{h})$. By Proposition 3.2 in \cite{fortin1991mixed}, we have that $\Pi_{K_{h}}^{q} w^{-l} \cdot n_{h}^{+} \in \mathbb{P}^{q}(e_{h})$, which gives us the pointwise equality
\[\Pi_{K_{h}}^{q} w^{-l} \cdot n_{h}^{+} = -\Pi_{K_{h}}^{q} w^{-l} \cdot n_{h}^{-} \]
as required.
\end{proof} 


\begin{lemma}\label{lemma:assumptionConfirmation2b}
Let $\Pi_{K_{h}}^{q} w^{-l}$ be the local surface Raviart-Thomas interpolant of $w^{-l} \in [W^{1,\infty}(\Gamma_{h})]^{3}$ defined as in (\ref{eq:RTDefa})--(\ref{eq:RTDefb}) and let $\WT{\Pi}_{K_{h}}^{q} w$ be its local classical Raviart-Thomas interpolant. We then have that
\begin{align*}
\norm{\Pi_{K_{h}}^{q} w^{-l} - \WT{\Pi}_{K_{h}}^{q} w^{-l}}_{L^{\infty}(K_{h} \cup \partial K_{h})} \lesssim h^{2} 
\end{align*}
for each $K_{h} \in \mathcal{T}_{h}$.
\end{lemma}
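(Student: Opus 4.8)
The plan is to exploit that both interpolants live in the same finite-dimensional space and differ only through the edge conditions. First I would observe that $\Pi_{K_{h}}^{q} w^{-l}$ and $\WT{\Pi}_{K_{h}}^{q} w^{-l}$ are both elements of $RT^{q}(K_{h})$, fixed by the unisolvent degrees of freedom (\ref{eq:RTDOFa})--(\ref{eq:RTDOFb}). They impose \emph{identical} interior conditions (\ref{eq:RTDefb}) and differ only in the right-hand side of the edge conditions (\ref{eq:RTDefa}): the surface interpolant tests $w^{-l}$ against the average conormal $n_{e_h}^{+}$, the classical one against $n_h^{+}$. Hence the difference $D_h := \Pi_{K_{h}}^{q} w^{-l} - \WT{\Pi}_{K_{h}}^{q} w^{-l}\in RT^{q}(K_h)$ has \emph{vanishing} interior degrees of freedom and edge degrees of freedom
\[
\int_{e_h} D_h\cdot n_h^{+}\,p_q\,\dsh
 = \int_{e_h} w^{-l}\cdot\bigl(n_{e_h}^{+} - n_h^{+}\bigr)\,p_q\,\dsh,
 \qquad p_q\in\mathbb{P}^q(e_h).
\]
The problem thus reduces to two steps: (i) a pointwise bound $\norm{w^{-l}\cdot(n_{e_h}^{+}-n_h^{+})}_{L^\infty(\mathcal{E}_h)}\lesssim h^2$, and (ii) a scaling argument turning this into the $L^\infty$ estimate on $D_h$.

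For step (i), the tangentiality of $w$ does the work. Since $w\cdot\nu=0$ we have $\mathbf{P}w^{-l}=w^{-l}$, so by symmetry of $\mathbf{P}$,
\[
  w^{-l}\cdot\bigl(n_{e_h}^{+}-n_h^{+}\bigr)
   = w^{-l}\cdot\mathbf{P}\bigl(n_{e_h}^{+}-n_h^{+}\bigr).
\]
I would then show $\mathbf{P}n_h^{+}=n^{+}+O(h^2)$ and $\mathbf{P}n_{e_h}^{+}=n^{+}+O(h^2)$, whence $\mathbf{P}(n_{e_h}^{+}-n_h^{+})=O(h^2)$ and the product is $O(h^2)$. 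The first of these is exactly (\ref{eq:Gamma2GammahSmalle}). For the second, writing $n_{e_h}^{+}=(n_h^{+}-n_h^{-})/\abs{n_h^{+}-n_h^{-}}$ and applying (\ref{eq:Gamma2GammahSmalle}) to both $n_h^{\pm}$ together with $n^{+}=-n^{-}$ on $\Gamma$ gives $\mathbf{P}(n_h^{+}-n_h^{-})=2n^{+}+O(h^2)$; combined with the denominator estimate $\abs{n_h^{+}-n_h^{-}}=2+O(h^2)$ this yields $\mathbf{P}n_{e_h}^{+}=n^{+}+O(h^2)$. The denominator estimate itself follows from $n_h^{\pm}\cdot\nu=n_h^{\pm}\cdot(\nu-\nu_h)=O(h)$ (using $n_h^{\pm}\cdot\nu_h=0$ and (\ref{eq:Gamma2GammahSmallc})) together with (\ref{eq:Gamma2GammahSmalle}), which give $n_h^{+}\cdot n_h^{-}=-1+O(h^2)$ and hence $\abs{n_h^{+}-n_h^{-}}^2=2-2\,n_h^{+}\cdot n_h^{-}=4+O(h^2)$.

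For step (ii), the integrand bound gives $\abs{\int_{e_h} D_h\cdot n_h^{+}p_q\,\dsh}\lesssim h^2\abs{e_h}\lesssim h^3$ for $p_q$ normalised to $O(1)$ on the reference edge. Since the interior degrees of freedom of $D_h$ vanish, $D_h$ is a combination of the edge shape functions of $RT^{q}(K_h)$, which on a shape-regular planar triangle of diameter $h$ scale like $h^{-1}$ in $L^\infty(K_h\cup\partial K_h)$; this is the standard Raviart--Thomas scaling, obtained by mapping to the reference element $K$ via the transform in the definition of $RT^{q}(K_h)$ and using equivalence of norms on the fixed-dimensional reference space. Multiplying the two scalings gives $\norm{D_h}_{L^\infty(K_h\cup\partial K_h)}\lesssim h^3\cdot h^{-1}=h^2$, as claimed.

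The hard part is the second-order cancellation in step (i). Naively $n_{e_h}^{+}-n_h^{+}$ is only $O(h)$, since the two conormals differ at first order across a curved edge; the gain to $O(h^2)$ appears only after contracting with the nearly tangential field $w^{-l}$, i.e.\ after projecting with $\mathbf{P}$, and it hinges on the quadratic accuracy in (\ref{eq:Gamma2GammahSmalle}) propagating correctly through the normalisation $\abs{n_h^{+}-n_h^{-}}=2+O(h^2)$.
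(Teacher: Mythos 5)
Your proof is correct and takes essentially the same route as the paper's: both reduce the difference to the edge degrees of freedom (the interior ones cancel), use the tangentiality $\mathbf{P}w^{-l}=w^{-l}$ to replace the conormals by their tangential projections, invoke (\ref{eq:Gamma2GammahSmalle}) for the $O(h^2)$ gain, and conclude by scaling of the edge basis functions, your $h^{3}\cdot h^{-1}$ bookkeeping being a normalisation-equivalent version of the paper's $h^{2}\cdot O(1)$. If anything, your write-up supplies a detail the paper leaves implicit, namely the explicit verification that $\norm{n-\mathbf{P}n_{e_{h}}^{+}}\lesssim h^{2}$ for the averaged conormal via $n_{h}^{\pm}\cdot\nu=O(h)$ and $|n_{h}^{+}-n_{h}^{-}|=2+O(h^{2})$, which the paper attributes to Lemma \ref{Gamma2GammahSmall} without proof.
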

\begin{proof}
Denote by $\{N_{i}^{\partial K_{h}}\}_{i = 1}^{n_{\partial K_{h}}}$ the set of local degrees of freedom given by (\ref{eq:RTDOFa}) and $\{\varphi_{i}^{\partial K_{h}}\}_{i = 1}^{n_{\partial K_{h}}}$ the associated (vector-valued) basis functions. Similarly, we denote by  $\{N_{i}^{K_{h}}\}_{i = 1}^{n_{K_{h}}}$ the set of local degrees of freedom given by (\ref{eq:RTDOFb}) and $\{\varphi_{i}^{K_{h}}\}_{i = 1}^{n_{K_{h}}}$ the associated (vector-valued) basis functions. The local degrees of freedom for the local standard Raviart-Thomas interpolant $\{\WT{N}_{i}\}_{i = 1}^{n_{\partial K_{h}}}$ and $\{\WT{N}_{i}\}_{i = 1}^{n_{K_{h}}}$ are defined similarly.

We then have that \[\Pi_{K_{h}}^{q} w^{-l}(x) = \sum_{i = 1}^{n_{\partial K_{h}}} N_{i}^{\partial K_{h}}(w^{-l})\varphi_{i}^{\partial K_{h}}(x) + \sum_{i = 1}^{n_{K_{h}}} N_{i}^{K_{h}}(w^{-l})\varphi_{i}^{K_{h}}(x),\] and similarly for $\WT{\Pi}_{K_{h}}^{q} w^{-l}$. Then by noting that $N_{i}^{K_{h}}(w^{-l}) = \WT{N}_{i}^{K_{h}}(w^{-l})$ and making use of (\ref{eq:RTDOFa}) and (\ref{eq:RTDefa}), we have that
\begin{align*}
&\norm{\Pi_{K_{h}}^{q} w^{-l} - \WT{\Pi}_{K_{h}}^{q} w^{-l}}_{L^{\infty}(K_{h} \cup \partial K_{h})} \\ 
&= \left\norm{\sum_{i = 1}^{n_{\partial K_{h}}}\left( N_{i}^{\partial K_{h}}(w^{-l}) - \WT{N}_{i}^{\partial K_{h}}(w^{-l})\right)\varphi_{i}^{\partial K_{h}}\right}_{L^{\infty}(K_{h} \cup \partial K_{h})} \\
&\leq \max_{1 \leq i \leq n_{\partial K_{h}}} \left| N_{i}^{\partial K_{h}}(w^{-l}) - \WT{N}_{i}^{\partial K_{h}}(w^{-l}) \right| \sum_{i = 1}^{n_{\partial K_{h}}} \left|\varphi_{i}^{\partial K_{h}}\right| \lesssim \max_{1 \leq i \leq n_{\partial K_{h}}} \left| N_{i}^{\partial K_{h}}(w^{-l}) - \WT{N}_{i}^{\partial K_{h}}(w^{-l}) \right| \\
&= \max_{1 \leq i \leq n_{\partial K_{h}}} \left| \int_{e_{h}} w^{-l} \cdot n_{e_{h}}^{+} \xi_{i}\dsh - \int_{e_{h}} w^{-l} \cdot n_{h}^{+} \xi_{i}\dsh  \right| = \max_{1 \leq i \leq n_{\partial K_{h}}} \left| \int_{e_{h}} w^{-l} \cdot \left( \mathbf{P}^{-l}n_{e_{h}}^{+} - \mathbf{P}^{-l}n_{h}^{+}\right) \xi_{i} \dsh \right| \\
&\lesssim \norm{n - \mathbf{P}n_{e_{h}}^{+}}_{L^{\infty}(\mathcal{E}_{h})} + \norm{n - \mathbf{P}n_{h}^{+}}_{L^{\infty}(\mathcal{E}_{h})} \lesssim h^{2}  
\end{align*}
where $\{ \xi_{i}\}$ denote the basis functions of $\mathbb{P}^{q}(e_{h})$. The last estimate follows from Lemma \ref{Gamma2GammahSmall}.
\end{proof}

The following theorem will help justify assumption (\ref{eq:advectionGeometricEstimate1a})
for the case of the local surface Raviart-Thomas interpolant of zero order ($q=0$).
\begin{theorem}\label{lemma:assumptionConfirmation2a3a}
Let $w^{-l} \in [W^{1,\infty}(K_{h})]^{3}$ and $\WT{\Pi}_{K_{h}}^{0} w^{-l}$ be its local classical Raviart-Thomas interpolant of zero order defined only through condition (\ref{eq:RTDefa}) (with $n_{e_{h}}^{+}$ replaced by $n_{h}^{+}$). We then have that
\begin{align*}
\norm{\mathbf{P}_{h} w^{-l} - \WT{\Pi}_{K_{h}}^{0} w^{-l}}_{L^{\infty}(K_{h})} &\lesssim h \norm{\nabla_{\Gamma_{h}}w^{-l}}_{L^{\infty}(K_{h})}, \\
\left\norm{\nabla_{\Gamma_{h}} \cdot \left( \mathbf{P}_{h} w^{-l} - \WT{\Pi}_{K_{h}}^{0} w^{-l}\right)\right}_{L^{\infty}(K_{h})} &\lesssim h | \nabla_{\Gamma_{h}} w^{-l} |_{W^{1,\infty}(K_{h})} 
\end{align*}
for each $K_{h} \in \mathcal{T}_{h}$.
\end{theorem}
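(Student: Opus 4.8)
The plan is to reduce both estimates to the classical lowest-order Raviart--Thomas interpolation theory on the planar triangle $K_{h}$. The first observation is that, since the conormal $n_{h}^{+}$ is by construction tangential to the planar element $K_{h}$, we have $w^{-l}\cdot n_{h}^{+} = (\mathbf{P}_{h}w^{-l})\cdot n_{h}^{+}$ on each edge, so that the defining conditions (\ref{eq:RTDefa}) (with $n_{e_{h}}^{+}$ replaced by $n_{h}^{+}$) yield $\WT{\Pi}_{K_{h}}^{0}w^{-l} = \WT{\Pi}_{K_{h}}^{0}(\mathbf{P}_{h}w^{-l})$. In other words, the interpolant only sees the \emph{tangential} field $\mathbf{P}_{h}w^{-l}$, which takes values in the (two-dimensional) tangent plane of $K_{h}$. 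Because $K_{h}$ is flat, $\mathbf{P}_{h}$ is constant on $K_{h}$, the tangential gradient $\nabla_{\Gamma_{h}}$ coincides with the in-plane gradient in tangent-plane coordinates, and $\WT{\Pi}_{K_{h}}^{0}$ becomes precisely the standard $RT^{0}(K_{h})$ interpolant. Thus both quantities we must bound are the genuine planar interpolation error for the field $v := \mathbf{P}_{h}w^{-l}$.

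For the first estimate I would use that $RT^{0}(K_{h})$ contains the constant vector fields, so that $\WT{\Pi}_{K_{h}}^{0}$ reproduces constants. A standard Bramble--Hilbert argument, carried out on the reference element $K$ and transported to $K_{h}$ via $F_{K_{h}}$ and the associated (contravariant Piola) transform, then gives $\norm{v - \WT{\Pi}_{K_{h}}^{0}v}_{L^{\infty}(K_{h})} \lesssim h\,|v|_{W^{1,\infty}(K_{h})}$. It remains to convert the right-hand side: since $\mathbf{P}_{h}$ is constant on $K_{h}$ we have $\nabla_{\Gamma_{h}}(\mathbf{P}_{h}w^{-l}) = \mathbf{P}_{h}\nabla_{\Gamma_{h}}w^{-l}$, and because $\mathbf{P}_{h}$ is an orthogonal projection, $|v|_{W^{1,\infty}(K_{h})} = \norm{\mathbf{P}_{h}\nabla_{\Gamma_{h}}w^{-l}}_{L^{\infty}(K_{h})} \leq \norm{\nabla_{\Gamma_{h}}w^{-l}}_{L^{\infty}(K_{h})}$, which yields the claimed bound.

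For the divergence estimate I would invoke the commuting-diagram property of the lowest-order Raviart--Thomas interpolant: $\nabla_{\Gamma_{h}}\cdot \WT{\Pi}_{K_{h}}^{0}v = P_{0}\big(\nabla_{\Gamma_{h}}\cdot v\big)$, where $P_{0}$ denotes the $L^{2}(K_{h})$ projection onto constants (i.e.\ $\Pi_{h0}$ restricted to $K_{h}$). Consequently the divergence error equals the projection error of the scalar function $\nabla_{\Gamma_{h}}\cdot v$ onto piecewise constants, and Lemma~\ref{interpolationEstimate} (or again a scaling argument) gives $\norm{\nabla_{\Gamma_{h}}\cdot(v - \WT{\Pi}_{K_{h}}^{0}v)}_{L^{\infty}(K_{h})} = \norm{(\mathrm{Id}-P_{0})(\nabla_{\Gamma_{h}}\cdot v)}_{L^{\infty}(K_{h})} \lesssim h\,|\nabla_{\Gamma_{h}}\cdot v|_{W^{1,\infty}(K_{h})}$. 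Since $\mathbf{P}_{h}$ is constant, $\nabla_{\Gamma_{h}}\cdot v$ is a fixed linear contraction of $\nabla_{\Gamma_{h}}w^{-l}$, so its $W^{1,\infty}$ seminorm is controlled by $|\nabla_{\Gamma_{h}}w^{-l}|_{W^{1,\infty}(K_{h})}$, giving the second bound.

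The step requiring the most care is the scaling argument underlying both interpolation estimates: the Raviart--Thomas degrees of freedom and basis functions transform under the contravariant Piola map rather than by simple composition with $F_{K_{h}}$, so one must track the determinant and Jacobian factors carefully and use the shape-regularity of $\calT_{h}$ to guarantee that the constants are independent of $h$. Once the correct Piola scaling of the $L^{\infty}$ and $W^{1,\infty}$ norms between $K$ and $K_{h}$ is established, both estimates follow from polynomial reproduction exactly as in the planar theory.
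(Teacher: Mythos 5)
Your proposal is correct and follows essentially the same route the paper takes, since the paper's proof is given purely by citation: the first bound is deferred to Theorem 6.3 of \cite{acosta2011error}, whose underlying mechanism is exactly your Piola-scaling/constant-reproduction (Bramble--Hilbert) argument, and the second to Theorem 1.114 of \cite{ern2004theory}, which is precisely the commuting-diagram identity $\nabla_{\Gamma_h}\cdot\WT{\Pi}_{K_h}^{0}v = P_0(\nabla_{\Gamma_h}\cdot v)$ combined with the piecewise-constant projection error of Lemma \ref{interpolationEstimate}. Your preliminary reduction $\WT{\Pi}_{K_h}^{0}w^{-l} = \WT{\Pi}_{K_h}^{0}(\mathbf{P}_h w^{-l})$ (valid because $n_h^{+}$ is tangential to the planar element, so the edge degrees of freedom see only the tangential part) is a clean way to make explicit why the surface statement reduces to the classical planar theory, a step the paper leaves implicit.
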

\begin{proof}
The proof of the first estimate follows similar lines as that of Theorem 6.3 in \cite{acosta2011error}. The second estimate follows similar lines as that of Theorem 1.114 in \cite{ern2004theory}. 
\end{proof}

The first estimate of Theorem \ref{lemma:assumptionConfirmation2a3a} together with Lemma \ref{lemma:assumptionConfirmation2b}  guarantees that the local surface Raviart-Thomas interpolant also satisfies Theorem \ref{lemma:assumptionConfirmation2a3a}. As such, assumption (\ref{eq:advectionGeometricEstimate1a}) holds when choosing $w_{h}$ to be the local surface Raviart-Thomas interpolant of zero order. 


We finally show that assumption (\ref{eq:advectionGeometricEstimate2}) holds for local surface Raviart-Thomas interpolants of zero order.

\begin{lemma}\label{lemma:assumptionConfirmation3b}
Let $w^{-l} \in [W^{1,\infty}(K_{h})]^{3}$, $K_{h} \in \mathcal{T}_{h}$ , and $\Pi_{K_{h}}^{0} w^{-l}$ be its local surface Raviart-Thomas interpolant of zero order defined only through condition (\ref{eq:RTDefa}). We then have that
\[\left\norm{\nabla_{\Gamma_{h}} \cdot \Pi_{K_{h}}^{0} w^{-l}\right}_{L^{\infty}(K_{h})} \lesssim h . \]   
\end{lemma}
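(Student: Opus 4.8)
The plan is to compare the surface Raviart--Thomas interpolant against two more tractable objects: the \emph{classical} Raviart--Thomas interpolant $\WT{\Pi}_{K_{h}}^{0}w^{-l}$, whose divergence we already control through Theorem~\ref{lemma:assumptionConfirmation2a3a}, and the projected field $\mathbf{P}_{h}w^{-l}$, whose surface divergence can be analysed directly by geometry. On a fixed $K_{h}\in\mathcal{T}_{h}$ I would therefore write
\[
  \nabla_{\Gamma_{h}}\cdot\Pi_{K_{h}}^{0}w^{-l}
   = \nabla_{\Gamma_{h}}\cdot\big(\Pi_{K_{h}}^{0}w^{-l}-\WT{\Pi}_{K_{h}}^{0}w^{-l}\big)
   + \nabla_{\Gamma_{h}}\cdot\big(\WT{\Pi}_{K_{h}}^{0}w^{-l}-\mathbf{P}_{h}w^{-l}\big)
   + \nabla_{\Gamma_{h}}\cdot\big(\mathbf{P}_{h}w^{-l}\big),
\]
and bound each of the three contributions by $h$ separately.

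For the first term I note that both interpolants lie in $RT^{0}(K_{h})$ (each is defined through the edge conditions (\ref{eq:RTDefa}) alone), so their difference is an $RT^{0}$ field on the planar triangle $K_{h}$. Lemma~\ref{lemma:assumptionConfirmation2b} gives $\norm{\Pi_{K_{h}}^{0}w^{-l}-\WT{\Pi}_{K_{h}}^{0}w^{-l}}_{L^{\infty}(K_{h})}\lesssim h^{2}$, and a standard inverse inequality on $K_{h}$ (obtained by passing to the reference element) costs an $h^{-1}$ factor when taking the surface divergence of a finite-dimensional field, yielding $\lesssim h^{-1}h^{2}=h$. The second term is exactly the quantity controlled by the second estimate of Theorem~\ref{lemma:assumptionConfirmation2a3a}, which gives $\lesssim h\,|\nabla_{\Gamma_{h}}w^{-l}|_{W^{1,\infty}(K_{h})}\lesssim h$ using the assumed $W^{2,\infty}$ regularity of $w$.

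The real work is the geometric term $\nabla_{\Gamma_{h}}\cdot(\mathbf{P}_{h}w^{-l})$, and this is where I expect the main difficulty. First I would remove the projection: since $\nu_{h}$ is constant on the planar triangle and $\mathbf{P}_{h}\nu_{h}=0$, expanding $\mathbf{P}_{h}w^{-l}=w^{-l}-(\nu_{h}\cdot w^{-l})\nu_{h}$ shows that $\nabla_{\Gamma_{h}}\cdot(\mathbf{P}_{h}w^{-l})=\nabla_{\Gamma_{h}}\cdot w^{-l}$. Next I would differentiate $w^{-l}(x)=w(\xi(x))$ componentwise by the chain rule; using $\nabla\xi=\mathbf{P}-d\mathbf{H}$ (the Jacobian of the projection (\ref{eq:uniquePoint}), consistent with the scalar identity (\ref{eq:Gammah2GammaGradient})) and contracting with $\mathbf{P}_{h}$, one arrives at $\nabla_{\Gamma_{h}}\cdot w^{-l}=\mathrm{tr}\big((\nabla w)(\mathbf{P}-d\mathbf{H})\mathbf{P}_{h}\big)$, where the tangential structure of $w$ has been absorbed through $\mathbf{P}(\mathbf{P}-d\mathbf{H})=\mathbf{P}-d\mathbf{H}$.

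Finally I would exploit that the continuous field is divergence-free: $\nabla_{\Gamma}\cdot w=\mathrm{tr}\big((\nabla w)\mathbf{P}\big)=0$. Subtracting gives
\[
  \nabla_{\Gamma_{h}}\cdot w^{-l}
   = \mathrm{tr}\Big((\nabla w)\big[(\mathbf{P}-d\mathbf{H})\mathbf{P}_{h}-\mathbf{P}\big]\Big),
\]
and since $(\mathbf{P}-d\mathbf{H})\mathbf{P}_{h}-\mathbf{P}=(\mathbf{P}\mathbf{P}_{h}-\mathbf{P})-d\mathbf{H}\mathbf{P}_{h}$, the bracket is controlled by $\norm{\mathbf{P}-\mathbf{P}\mathbf{P}_{h}}_{L^{\infty}(\Gamma_{h})}\lesssim h$ from (\ref{advrelbis}) and $\norm{d}_{L^{\infty}(\Gamma_{h})}\lesssim h^{2}$ from (\ref{eq:Gamma2GammahSmalla}), so $|\nabla_{\Gamma_{h}}\cdot w^{-l}|\lesssim h\,\norm{w}_{W^{1,\infty}}$. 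Combining the three bounds completes the proof. The main obstacle is keeping the chain-rule and tensor bookkeeping of this last step correct, in particular recognising that the divergence-free cancellation leaves precisely the $O(h)$ geometric discrepancy $\mathbf{P}\mathbf{P}_{h}-\mathbf{P}$ rather than the sharper $O(h^{2})$ factor $\mathbf{P}-\mathbf{P}\mathbf{P}_{h}\mathbf{P}$ of (\ref{advrel}); this is consistent with the claimed rate, since the lemma only asserts $\lesssim h$.
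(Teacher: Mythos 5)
Your proof is correct, and its skeleton coincides with the paper's: after inserting $\WT{\Pi}_{K_{h}}^{0}w^{-l}$ and $\mathbf{P}_{h}w^{-l}$, the paper bounds exactly your three terms. It handles $\nabla_{\Gamma_{h}}\cdot\big(\WT{\Pi}_{K_{h}}^{0}w^{-l}-\mathbf{P}_{h}w^{-l}\big)$ by the second estimate of Theorem~\ref{lemma:assumptionConfirmation2a3a}, just as you do, and it handles $\nabla_{\Gamma_{h}}\cdot\big(\Pi_{K_{h}}^{0}w^{-l}-\WT{\Pi}_{K_{h}}^{0}w^{-l}\big)$ by expanding in the $RT^{0}$ basis and pairing the $O(h^{2})$ degree-of-freedom discrepancies from Lemma~\ref{lemma:assumptionConfirmation2b} with $\sum_{i}\big|\nabla_{\Gamma_{h}}\cdot\varphi_{i}^{\partial K_{h}}\big|\lesssim h^{-1}$ --- the same $h^{-1}\cdot h^{2}$ numerology as your inverse inequality on $RT^{0}(K_{h})$, which is legitimate since for $q=0$ condition (\ref{eq:RTDefb}) is void, both interpolants lie in the finite-dimensional space $RT^{0}(K_{h})$, and $K_{h}$ is planar so reference-element scaling applies. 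The one genuine difference is the geometric term: the paper simply cites Lemma 3.2 of \cite{olshanskiivolume} for $\norm{\nabla_{\Gamma_{h}}\cdot\mathbf{P}_{h}w^{-l}}_{L^{\infty}(K_{h})}\lesssim h$, whereas you prove it inline via the reduction $\nabla_{\Gamma_{h}}\cdot(\mathbf{P}_{h}w^{-l})=\nabla_{\Gamma_{h}}\cdot w^{-l}$ (valid because $\nu_{h}$ is constant on each planar triangle, so both product-rule terms vanish), the chain rule through $\nabla\xi=\mathbf{P}-d\mathbf{H}$, and cancellation against $\nabla_{\Gamma}\cdot w=0$, leaving a bracket controlled by (\ref{advrelbis}) and (\ref{eq:Gamma2GammahSmalla}). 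This buys a self-contained argument in place of an external citation, and your closing remark correctly diagnoses why the rate is only $O(h)$: the residual factor is $\mathbf{P}\mathbf{P}_{h}-\mathbf{P}=O(h)$ rather than the sandwiched $O(h^{2})$ quantity of (\ref{advrel}), since the gradient matrix of $w$ carries the tangential projection only on its derivative index (differentiating the tangency constraint produces $O(1)$ curvature terms on the component index). Note finally that the divergence-free hypothesis $\nabla_{\Gamma}\cdot w=0$ you invoke is part of the paper's standing assumptions on $w$, so nothing is missing there, though it is worth flagging explicitly since the lemma statement itself does not restate it.
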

\begin{proof}
We have that
\begin{align*}
\left\norm{\nabla_{\Gamma_{h}} \cdot \Pi_{K_{h}}^{0} w^{-l}\right}_{L^{\infty}(K_{h})} \leq &\left\norm{\nabla_{\Gamma_{h}} \cdot \left( \Pi_{K_{h}}^{0} w^{-l} -  \mathbf{P}_{h} w^{-l}\right)\right}_{L^{\infty}(K_{h})}\\ 
&+ \norm{\nabla_{\Gamma_{h}} \cdot \mathbf{P}_{h} w^{-l}}_{L^{\infty}(K_{h})}.  
\end{align*}
Making use of Lemma 3.2 in \cite{olshanskiivolume}, we have that the second term scales like $h$. For the first term, we have that
\begin{align*}
\left\norm{\nabla_{\Gamma_{h}} \cdot \left( \Pi_{K_{h}}^{0} w^{-l} -  \mathbf{P}_{h} w^{-l}\right)\right}_{L^{\infty}(K_{h})} &\leq \left\norm{\nabla_{\Gamma_{h}} \cdot \left( \Pi_{K_{h}}^{0} w^{-l} -  \WT{\Pi}_{K_{h}}^{0} w^{-l}\right)\right}_{L^{\infty}(K_{h})}\\ 
&+ \left\norm{\nabla_{\Gamma_{h}} \cdot \left( \WT{\Pi}_{K_{h}}^{0} w^{-l} -  \mathbf{P}_{h} w^{-l}\right)\right}_{L^{\infty}(K_{h})}.
\end{align*}
The second term in the above scales appropriately by the second estimate of Theorem \ref{lemma:assumptionConfirmation2a3a}. For the first term we proceed as in the proof of Lemma \ref{lemma:assumptionConfirmation2b} to get that
\begin{align*}
&\left\norm{\nabla_{\Gamma_{h}} \cdot \left( \Pi_{K_{h}}^{0} w^{-l} -  \WT{\Pi}_{K_{h}}^{0} w^{-l}\right)\right}_{L^{\infty}(K_{h})} \\
&\leq \max_{1 \leq i \leq n_{\partial K_{h}}} \left| N_{i}^{\partial K_{h}}(w^{-l}) - \WT{N}_{i}^{\partial K_{h}}(w^{-l}) \right| \sum_{1 \leq i \leq n_{\partial K_{h}}} \left|\nabla_{\Gamma_{h}} \cdot \varphi_{i}^{\partial K_{h}}\right| \\ 
&\lesssim \max_{1 \leq i \leq n_{\partial K_{h}}}\left| N_{i}^{\partial K_{h}}(w^{-l}) - \WT{N}_{i}^{\partial K_{h}}(w^{-l}) \right| h^{-1} \lesssim h 
\end{align*}
as required.
\end{proof}

\section{Numerical tests}
For the test problems discussed below, we will focus on a surface IP discretisation of the diffusion term and call the resulting approximation the surface IP/UP approximation. Furthermore, the discrete velocity field $w_{h}$ is chosen to be the zero order surface Raviart-Thomas interpolant of $w^{-l}$ i.e. $w_{h}|_{K_{h}} = \Pi_{K_{h}}^{0}w^{-l}$. We will also briefly discuss the case when we choose $w_{h} = w^{-l}$ in the numerics.

\subsection{Test problem on torus}
Our first test problem, considered in \cite{olshanskiivolume}, involves solving 
(\ref{eq:weakH1AdvectionDiffusion}) on the torus
\[\Gamma = \left\{(x_{1}, x_{2}, x_{3}) \ | \ \left( \sqrt{x_{1}^{2} + x_{2}^{2}} -1 \right)^{2} + x_{3}^{2} = \frac{1}{16}\right\}\]
with velocity field
\[ w(x) =\frac{1}{\sqrt{x_{1}^{2} + x_{2}^{2}}}\left(-x_{2},x_{1}, 0\right)^{T}.  \]
Note that the velocity field $w$ is tangential to the torus and divergence-free. We set $\epsilon = 10^{-6}$ and construct the right-hand side $f$ such that the solution $u$ of 
(\ref{eq:weakH1AdvectionDiffusion}) is given by 
\[u(x) = \frac{x_{1} x_{2}}{\pi} \arctan \left(\frac{x_{3}}{\sqrt{\epsilon}}\right).  \]
Note that $u$ has a sharp internal layer as shown in Figure \ref{fig:advectionProblem}.

\begin{figure}[htp]
\begin{center}
\hspace{-2.0cm} \includegraphics[width=1.7in]{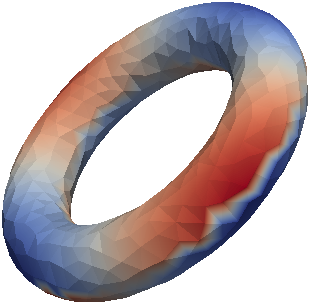}\\
\includegraphics[width=2.7in]{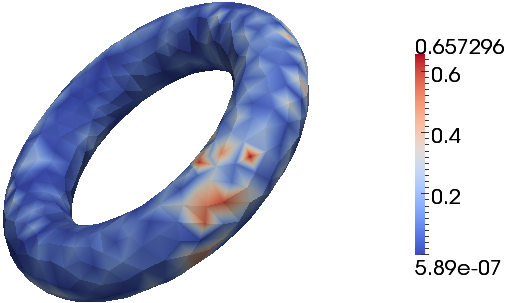}
\includegraphics[width=2.7in]{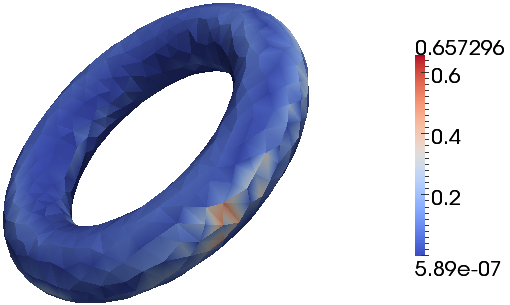}
\end{center}
\caption{Exact solution of (\ref{eq:weakH1AdvectionDiffusion}) (top) and pointwise errors for respectively the (unstabilised) surface FEM approximation (bottom left) and the surface IP/UP approximation (bottom right) on the torus (1410 elements).}
\label{fig:advectionProblem}
\end{figure}

Figure \ref{fig:advectionProblem} shows the exact solution and both the unstabilised surface FEM approximation and the surface IP/UP approximation of 
(\ref{eq:weakH1AdvectionDiffusion}). 
Notice how, as in the planar case, the unstabilised surface FEM approximation exhibits global spurious oscillations whilst the surface IP/UP approximation is completely free of such oscillations. We obtain similar results for the case when we choose $w_{h} = w^{-l}$ in the surface IP/UP method, although $L^{\infty}$ errors tend to be slightly larger for such a choice.  

\subsection{Test problem on sphere}
Next, we consider (\ref{eq:weakH1AdvectionDiffusion}) on the unit sphere
\[\Gamma = \{ x \in \mathbb{R}^{3}\ : \ |x| = 1 \} \]
with velocity field
\[ w(x) =\left(-x_{2}\sqrt{1-x_{3}^{2}},x_{1}\sqrt{1-x_{3}^{2}}, 0\right)^{T}.  \]
Notice again that $w$ is tangential to the sphere and divergence-free. We set $\epsilon = 10^{-6}$ and construct the right-hand side $f$ such that the solution $u$ of 
(\ref{eq:weakH1AdvectionDiffusion}) is given by the expression given in the previous test problem. Tables \ref{tab:advectionProblem1FEM} and \ref{tab:advectionProblem1DG} show the $L^{2}$ and DG norm errors/EOCs outside the sharp internal layer, given by $D = \{ x \in \Gamma \ : \ |x_{3}| > 0.3\}$, for respectively the (unstabilised) surface FEM approximation and the surface IP/UP approximation.

\begin{table}[htp]
\begin{center}
\begin{tabular}{|c|c|cc|cc|}
\hline
Elements&$h$
    &$L_{2}(D)$-error&$L_{2}(D)$-eoc
    &$DG(D)$-error&$DG(D)$-eoc\\
\hline
632   &0.2239&0.04462&    &0.865 &\\
2528  &0.1121&0.01736&1.36&0.652 &0.40\\
10112 &0.0561&0.00936&0.89&0.727 &-0.16\\
40448 &0.0280&0.00604&0.63&0.934 &-0.36\\
161792&0.0140&0.00356&0.76&1.095 &-0.23\\
647168&0.0070&0.00169&1.07&1.038 &0.08\\
\hline
\end{tabular}
\end{center}
\caption{Errors and convergence orders for the (unstabilised) surface FEM
approximation of (\ref{eq:weakH1AdvectionDiffusion}) on the subdomain $D$ of the unit sphere.}
\label{tab:advectionProblem1FEM}
\end{table}

\begin{table}[htp]
\begin{center}
\begin{tabular}{|c|c|cc|cc|}
\hline
Elements&$h$
    &$L_{2}(D)$-error&$L_{2}(D)$-eoc
    &$DG(D)$-error&$DG(D)$-eoc\\
\hline
632   &0.2239&0.0073256&    &0.15932 &\\
2528  &0.1121&0.0021745&1.75&0.08892 &0.84\\
10112 &0.0561&0.0006499&1.75&0.05015 &0.83\\
40448 &0.0280&0.0001917&1.76&0.02820 &0.83\\
161792&0.0140&5.399e-05&1.83&0.01537 &0.88\\
647168&0.0070&1.394e-05&1.95&0.00778 &0.98\\
\hline
\end{tabular}
\end{center}
\caption{Errors and convergence orders for the IP/UP approximation of 
(\ref{eq:weakH1AdvectionDiffusion}) on the subdomain $D$ of the unit sphere.}
\label{tab:advectionProblem1DG}
\end{table}

The results clearly indicate that the surface IP/UP method performs better than the unstabilised surface FEM. The results for the surface IP/UP method indicate a $O(h^2)$ convergence in the $L^{2}(D)$-norm and $O(h)$ in the $DG(D)$-norm. The unstabilised surface FEM, on the other hand, shows a much more erratic behaviour and does not attain its asymptotic convergence rates within our computational domain.   

\begin{table}[htp]
\begin{center}
\begin{tabular}{|c|c|cc|cc|}
\hline
Elements&$h$
    &$L_{2}(D)$-error&$L_{2}(D)$-eoc
    &$DG(D)$-error&$DG(D)$-eoc\\
\hline
632   &0.2239&0.0040846&    &0.11275&\\
2528  &0.1121&0.0010464&1.96&0.05707&0.98\\
10112 &0.0561&0.0002654&1.98&0.02867&0.99\\
40448 &0.0280&6.679e-05&1.99&0.01437&1.00\\
161792&0.0140&1.670e-05&2.00&0.00718&1.00\\
647168&0.0070&4.161e-06&2.00&0.00359&1.00\\
\hline
\end{tabular}
\end{center}
\caption{Errors and convergence orders for the IP/UP approximation of
(\ref{eq:weakH1AdvectionDiffusion})  with $w_{h} = w^{-l}$ on the subdomain $D$ of the unit sphere.}
\label{tab:advectionProblem1DGOld}
\end{table}

Table \ref{tab:advectionProblem1DGOld} show the relevant errors when using $w_{h} = w^{-l}$ in the surface IP/UP approximation. The errors appear to be smaller by a factor of about $0.5$ compared to those shown in Table \ref{tab:advectionProblem1DG} for which we chose $w_{h}|_{K_{h}} = \Pi_{K_{h}}^{0}w^{-l}$. This can be explained by the fact that triangulations for simple surfaces such as the unit sphere can be constructed to be very ``smooth'' (in the sense that the relation $n_{h}^{+} = -n_{h}^{-}$ practically holds for each $e_{h} \in \mathcal{E}_{h}$) and that the zero order Raviart-Thomas approximation error is relatively large.     

\section{Conclusions}

It is well known that the DG method is especially well suited for
stabilizing transport terms in PDE models. This type of problem has not yet
been studied using discrete surface finite-elements. In this paper we proved a-priori error
estimate for the DG method for the stationary linear hyperbolic problems and elliptic problems with possibly 
dominate advection term. This extends previous results for the laplace equations on surfaces.
The theory and the numerical experiments show that special care has to be
taken when projecting the given continuous velocity field unto the discrete
surface. The main two problems identified are the non zero divergence and
the jump in the normal component. Different suggestions were discussed on
how to handle these problems either by extending the bilinear form to take
the jump in the velocity field into account or by the use of a special
projection based on Raviart-Thomas like interpolation operators. Numerical
experiments demonstrate the accuracy of the resulting method.

\section*{Acknowledgements}
This research has been supported by the British Engineering and Physical Sciences Research Council (EPSRC), Grant EP/H023364/1.

\bibliographystyle{IMANUM-BIB}
\bibliography{IMANUM-refs}

\begin{thebibliography}{}

\bibitem[Acosta {\em et~al.}(2011)Acosta, Apel, Dur{\'a}n, \&
  Lombardi]{acosta2011error}
{\sc Acosta, G., Apel, T., Dur{\'a}n, R. \& Lombardi, A.} (2011)
\newblock Error estimates for raviart-thomas interpolation of any order on
  anisotropic tetrahedra.
\newblock {\em Mathematics of Computation\/}, {\bf 80}, 141--163.

\bibitem[Antonietti {\em et~al.}(2014)Antonietti, Dedner, Madhavan, Stangalino,
  Stinner, \& Verani]{antonietti2013analysis}
{\sc Antonietti, P., Dedner, A., Madhavan, P., Stangalino, S., Stinner, B. \&
  Verani, M.} (2014)
\newblock High order discontinuous galerkin methods on surfaces.
\newblock {\em arXiv preprint arXiv:1402.3428\/}.

\bibitem[Brezzi {\em et~al.}(2004)Brezzi, Marini, \&
  S{\"u}li]{brezzi2004discontinuous}
{\sc Brezzi, F., Marini, L. \& S{\"u}li, E.} (2004)
\newblock Discontinuous galerkin methods for first-order hyperbolic problems.
\newblock {\em Mathematical models and methods in applied sciences\/}, {\bf
  14}, 1893--1903.

\bibitem[Deckelnick {\em et~al.}(2001)Deckelnick, Elliott, \&
  Styles]{DecEllSty01}
{\sc Deckelnick, K., Elliott, C. \& Styles, V.} (2001)
\newblock Numerical diffusion-induced grain boundary motion.
\newblock {\em Interfaces Free Bound.}, {\bf 3}, 393--414.

\bibitem[Dedner {\em et~al.}(2013)Dedner, Madhavan, \&
  Stinner]{dedner2012analysis}
{\sc Dedner, A., Madhavan, P. \& Stinner, B.} (2013)
\newblock Analysis of the discontinuous galerkin method for elliptic problems
  on surfaces.
\newblock {\em IMA Journal of Numerical Analysis\/}.

\bibitem[Demlow(2009)Demlow]{demlow2009higher}
{\sc Demlow, A.} (2009)
\newblock Higher-order finite element methods and pointwise error estimates for
  elliptic problems on surfaces.
\newblock {\em SIAM J. Numer. Anal\/}, {\bf 47}, 805--827.

\bibitem[Dziuk(1988)Dziuk]{dziuk1988finite}
{\sc Dziuk, G.} (1988)
\newblock Finite elements for the beltrami operator on arbitrary surfaces.
\newblock {\em Partial differential equations and calculus of variations\/},
  142--155.

\bibitem[Dziuk \& Elliott(2007a)Dziuk \& Elliott]{dziuk2007finite}
{\sc Dziuk, G. \& Elliott, C.} (2007a)
\newblock Finite elements on evolving surfaces.
\newblock {\em IMA journal of numerical analysis\/}, {\bf 27}, 262.

\bibitem[Dziuk \& Elliott(2007b)Dziuk \& Elliott]{dziuk2007surface}
{\sc Dziuk, G. \& Elliott, C.} (2007b)
\newblock Surface finite elements for parabolic equations.
\newblock {\em J. Comput. Math\/}, {\bf 25}, 385--407.

\bibitem[Dziuk \& Elliott(2013)Dziuk \& Elliott]{dziuk2013finite}
{\sc Dziuk, G. \& Elliott, C.} (2013)
\newblock Finite element methods for surface pdes.
\newblock {\em Acta Numerica\/}, {\bf 22}, 289--396.

\bibitem[Elliott \& Stinner(2010)Elliott \& Stinner]{EllSti10}
{\sc Elliott, C. \& Stinner, B.} (2010)
\newblock Modeling and computation of two phase geometric biomembranes using
  surface finite elements.
\newblock {\em J. Comp. Phys.}, {\bf 229}, 6585--6612.

\bibitem[Ern(2004)Ern]{ern2004theory}
{\sc Ern, A.} (2004)
\newblock {\em Theory and practice of finite elements\/},  vol. 159.
\newblock Springer.

\bibitem[Fortin \& Brezzi(1991)Fortin \& Brezzi]{fortin1991mixed}
{\sc Fortin, M. \& Brezzi, F.} (1991)
\newblock {\em Mixed and hybrid finite element methods\/}.
\newblock Springer.

\bibitem[James \& Lowengrub(2004)James \& Lowengrub]{JamLow04}
{\sc James, A. \& Lowengrub, J.} (2004)
\newblock A surfactant-conserving volume-of-fluid method for interfacial flows
  with insoluble surfactant.
\newblock {\em J. Comp. Phys.}, {\bf 201}, 685--722.

\bibitem[Neilson {\em et~al.}(2011)Neilson, Mackenzie, Webb, \&
  Insall]{neilson2011modelling}
{\sc Neilson, M., Mackenzie, J., Webb, S. \& Insall, R.} (2011)
\newblock Modelling cell movement and chemotaxis pseudopod based feedback.
\newblock {\em SIAM Journal on Scientific Computing\/}, {\bf 33}.

\bibitem[Olshanskii {\em et~al.}(2013)Olshanskii, Reusken, \&
  Xu]{olshanskiivolume}
{\sc Olshanskii, M., Reusken, A. \& Xu, X.} (2013)
\newblock A stabilized finite element method for advection--diffusion equations
  on surfaces.
\newblock {\em IMA Journal of Numerical Analysis\/}, drt016.

\bibitem[Sokolov {\em et~al.}(2012)Sokolov, Strehl, \&
  Turek]{sokolov2012numerical}
{\sc Sokolov, A., Strehl, R. \& Turek, S.} (2012)
\newblock Numerical simulation of chemotaxis models on stationary surfaces.
\newblock {\em Technical Report}.
\newblock Fakult\"{a}t f\"{u}r Mathematik, TU Dortmund.
\newblock Ergebnisberichte des Instituts f\"{u}r Angewandte Mathematik, Nummer
  463.

\bibitem[Wloka(1987)Wloka]{wlokapartial}
{\sc Wloka, J.} (1987)
\newblock Partial differential equations.
\newblock {\em Cambridge University\/}.

\end{thebibliography}

\end{document}